\providecommand{\U}[1]{\protect\rule{.1in}{.1in}}
\newtheorem{X}{X}[section]
\newtheorem{corollary}[X]{Corollary}
\newtheorem{lemma}[X]{Lemma}
\newtheorem{proposition}[X]{Proposition}
\newtheorem{theorem}[X]{Theorem}
\newtheorem{definition}[X]{Definition}
\newtheorem{example}[X]{Example}
\newtheorem{plain}[X]{}
\newtheorem{remark}[X]{Remark}
\newtheorem{aside}[X]{Aside}
\theoremstyle{nonumberplain}
\newtheorem{proof}{Proof}
\titleformat*{\subsection}{\large\scshape}
\titleformat*{\subsubsection}{\slshape}
\titleformat*{\section}{\LARGE\bfseries}
\titleformat*{\subsection}{\Large\itshape}
\titleformat*{\subsubsection}{\scshape}
\titleformat*{\paragraph}{\itshape}
\setlist{topsep=0.1em,itemsep=0.1em,parsep=0.1em}
\newcommand{\eb}[1]{{\itshape\bfseries#1}}
\renewcommand{\emph}{\eb}
\let\cite\citealt
\newcommand{\bcomment}{}
\newcommand{\bfootnotesize}{\begin{footnotesize}}\newcommand\efootnotesize{\end{footnotesize}}
\newcommand{\bquote}{\begin{quote}}\newcommand\equote{\end{quote}}
\newcommand{\bsmall}{\begin{small}}\newcommand\esmall{\end{small}}
\newcommand{\btable}{\begin{table}}\newcommand{\etable}{\end{table}}
\newcommand{\edocument}{
\setcounter{tocdepth}{1}
\begin{document}

\title{On the Tate and Standard Conjectures over Finite Fields (version 1)}
\author{J. S. Milne}
\date{July 4, 2019}
\maketitle

\begin{abstract}
For an abelian variety over a finite field, Clozel (1999) showed that
$l$-homological equivalence coincides with numerical equivalence for
infinitely many $l$, and the author (1999)\nocite{milne1999} gave a criterion
for the Tate conjecture to follow from Tate's theorem on divisors. We
generalize both statements to motives, and apply them to other varieties 
including $K3$ surfaces.
\end{abstract}

\tableoc

\bigskip Let $X$ be a motive for rational equivalence over $\mathbb{F}_{q}$,
and let $\omega$ be the fibre functor defined by a standard Weil cohomology
theory. Assume that the Frobenius element $\pi_{X}$ of $X$ acts semisimply on
$\omega(X)$. Then $\mathbb{Q}{}[\pi_{X}]$ has a unique positive involution
$\alpha\mapsto\alpha^{\prime}$, and we let $S$ denote the algebraic group over
$\mathbb{Q}{}$ such that $S(\mathbb{Q}{})=\{a\in\mathbb{Q}{}[\pi]\mid a\cdot
a^{\prime}=1\}$.

We let $\bigotimes V$ denote the tensor algebra $\bigoplus_{n\in\mathbb{N}{}%
}V^{\otimes n}$ of a finite-dimensional vector space $V$ and $\langle
X\rangle^{\otimes}$ the pseudo-abelian rigid tensor subcategory generated by a
motive $X$ and the Tate object.

\begin{theorem}
\label{xx01}Let $X$ be a motive of weight $m$ over $\mathbb{F}{}_{q}$ whose
Frobenius element $\pi_{X}$ acts semisimply on $\omega(X)$.

\begin{enumerate}
\item The $\mathbb{Q}{}$-algebra\textrm{ }$(\bigotimes\omega(X))^{S}$ is
generated by $\omega(X^{\otimes2})^{S}$:%
\[
(\bigotimes\omega(X))^{S}=\mathbb{Q}{}[\omega(X^{\otimes2})^{S}].
\]

\item If $X^{\otimes2}(m)$ satisfies the Tate conjecture and $S$ is generated
by $\pi_{X}/q^{m/2}$, then the Tate conjecture holds for all motives in
$\langle X\rangle^{\otimes}$.
\end{enumerate}
\end{theorem}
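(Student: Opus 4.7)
The plan is to reduce the Tate conjecture for an arbitrary motive in $\langle X\rangle^{\otimes}$ to the Tate conjecture for $X^{\otimes 2}(m)$, using part (a) as the bridge between the two.

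First, since $\langle X\rangle^{\otimes}$ is the pseudo-abelian rigid tensor subcategory generated by $X$ and $\1(1)$, every object is a direct summand of a finite sum of objects of the form $X^{\otimes n}(r)$; and since idempotent projections preserve both Tate classes (they commute with the Frobenius action) and algebraic classes (the idempotents are themselves algebraic correspondences), it suffices to prove the Tate conjecture for each $X^{\otimes n}(r)$. Next, I would identify the Tate classes in $\omega(X^{\otimes n}(r))$ with $S$-invariants. The Frobenius of $X^{\otimes n}(r)$ acts on $\omega(X^{\otimes n}(r))=\omega(X)^{\otimes n}$ as $\pi_X^{\otimes n}/q^{r}$, and a Tate class is an eigenvector with eigenvalue $1$; since the eigenvalues of $\pi_X^{\otimes n}$ have complex absolute value $q^{nm/2}$, this forces $r=nm/2$ with $nm$ even. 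The Tate condition then reads $(\pi_X/q^{m/2})^{\otimes n}v=v$, so the assumption that $S$ is generated as a $\mathbb{Q}$-algebraic group by $\pi_X/q^{m/2}$ identifies the space of Tate classes in $X^{\otimes n}(nm/2)$ with $\omega(X^{\otimes n})^{S}$.

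Applying part (a), every such invariant lies in the degree-$n$ component of the $\mathbb{Q}$-subalgebra of $\bigotimes\omega(X)$ generated by $\omega(X^{\otimes 2})^{S}$, and is thus a $\mathbb{Q}$-linear combination of tensor products $v_{1}\otimes\cdots\otimes v_{n/2}$ with each $v_{i}\in\omega(X^{\otimes 2})^{S}$; in particular $\omega(X^{\otimes n})^{S}=0$ whenever $n$ is odd, so nothing need be proved in that range. Applying the identification of the previous paragraph to $X^{\otimes 2}(m)$ itself, each $v_{i}$ is a Tate class in $X^{\otimes 2}(m)$ and therefore algebraic by hypothesis, say realized by a motivic morphism $f_{i}\colon\1\to X^{\otimes 2}(m)$. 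Then $f_{1}\otimes\cdots\otimes f_{n/2}\colon\1\to X^{\otimes n}(nm/2)$ realizes $v_{1}\otimes\cdots\otimes v_{n/2}$, so all Tate classes in $X^{\otimes n}(nm/2)$ are algebraic, completing the argument.

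The main obstacle I anticipate is the opening reduction step: in the category of motives for rational equivalence one must verify carefully that $\langle X\rangle^{\otimes}$ is well-behaved enough that every object really is a direct summand of a sum of $X^{\otimes n}(r)$ and that the cutting idempotents are realized by algebraic correspondences (so that algebraicity of Tate classes passes to summands). Once this reduction is secured, the rest of the proof is essentially formal: the combination of part (a), the generation hypothesis on $S$, and the Tate conjecture for $X^{\otimes 2}(m)$ assemble cleanly through the identification of Tate classes with $S$-invariants.
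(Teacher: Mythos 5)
Your argument for part (b) is essentially the paper's own route: identify the Tate classes of $X^{\otimes n}(\tfrac{mn}{2})$ with the fixed points of $(\pi_{X}/q^{m/2})^{\otimes n}$, use the generation hypothesis to replace these by $S$-invariants, invoke part (a) to write every $S$-invariant as a combination of tensor products of elements of $\omega(X^{\otimes2})^{S}$, and observe that each such element is a Tate class of $X^{\otimes2}(m)$, hence algebraic. This is exactly the chain of inclusions in the proof of Theorem \ref{xx24}(b), combined with Lemma \ref{xx23} and the reduction of Remark \ref{xx07}. Two small imprecisions: the Tate conjecture for $X^{\otimes2}(m)$ only puts each $v_{i}$ in the $Q$-\emph{span} of algebraic classes, not in the image of a single morphism $f_{i}\colon\1\to X^{\otimes2}(m)$; the conclusion survives because the Tate conjecture for $X^{\otimes n}(\tfrac{mn}{2})$ likewise only asks for the $Q$-span. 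And for odd $n$ you should say explicitly that the identification forces the space of Tate classes to vanish (so the conjecture holds vacuously), rather than that ``nothing need be proved.'' The opening reduction you worry about is standard and is exactly the content of Remark \ref{xx07}.

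The genuine gap is that you prove nothing about part (a), which is the substantive half of the theorem and the engine of your part (b) argument. The paper's proof of (a) is not formal: it first shows (Lemma \ref{xx21}, by a characteristic-polynomial argument) that $\omega(X)$ is a \emph{free} $\mathbb{Q}[\pi_{X}]\otimes Q$-module, so that every embedding $\mathbb{Q}[\pi_{X}]\hookrightarrow Q^{\mathrm{al}}$ occurs as a weight of $S$ on $\omega(X)$ with equal multiplicity; it then computes the character group of the unitary group $S$ of $(\mathbb{Q}[\pi_{X}],{}')$ — generators $\xi_{\varphi_{i}},\xi_{\iota\varphi_{i}}$ with relations $\xi_{\varphi_{i}}+\xi_{\iota\varphi_{i}}=0$, plus $2$-torsion characters from real factors — and runs a combinatorial argument (Lemma \ref{xx08}) showing that any weight-zero summand $V(\Sigma)$ of $V^{\otimes n}$ factors through pairs $V_{\xi}\otimes V_{-\xi}\subset(V^{\otimes2})^{S}$. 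Without this, your claim that a degree-$n$ invariant decomposes into products of degree-$2$ invariants has no support: for a general group acting on $V$, the invariants of the tensor algebra are certainly not generated in degree $2$. So as written the proposal establishes only the implication (a)$\Rightarrow$(b), not the theorem.
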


In fact, we prove a more general statement (Theorem \ref{xx24}).

Our next theorem generalizes the theorem of \cite{clozel1999} (see also
\cite{deligne2009}). Let $X$ be a motive over $\mathbb{F}{}_{q}$ for rational
equivalence and $X_{l}$ (resp.\ $X_{\mathrm{num}}$) the image of $X$ in the
category of motives for $l$-adic homological equivalence (resp.\ for numerical equivalence).

\begin{theorem}
\label{xx02} Let $X$ be a motive of weight $m$ over $\mathbb{F}{}_{q}$. If
$X^{\otimes2}(m)$ satisfies the full Tate conjecture, then there exists an
infinite set of prime numbers $P(X)$, depending only on the centre of
$\End(X_{\mathrm{num}})$, such that the functor%
\[
\langle X_{l}\rangle^{\otimes}\rightarrow\langle X_{\mathrm{num}}%
\rangle^{\otimes}%
\]
is fully faithful for all $l\in P(X)$.
\end{theorem}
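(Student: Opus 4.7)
The functor $\langle X_l \rangle^\otimes \to \langle X_{\num} \rangle^\otimes$ is essentially surjective by construction, and full because numerical equivalence is coarser than $l$-adic homological equivalence. Full faithfulness is therefore equivalent to faithfulness, i.e.\ to the coincidence of $l$-adic homological equivalence with numerical equivalence on every object of $\langle X \rangle^\otimes$. By the K\"unneth formula this reduces, for each $n, r \geq 0$, to showing that every $l$-adic Tate class in $\omega_l(X)^{\otimes n}(r)$ is algebraic modulo $l$-adic equivalence. Since $\pi_X$ acts semisimply on $\omega_l(X)$, the cycle class map embeds algebraic classes modulo numerical equivalence into the $\pi_X$-invariants of $\omega_l(X)^{\otimes n}(r)$, so the problem is to compare these two subspaces.

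The next step is to bootstrap the hypothesis on $X^{\otimes 2}(m)$ by means of Theorem \ref{xx01}(1). The full Tate conjecture on $X^{\otimes 2}(m)$ provides both algebraicity of the degree-two $S$-invariants in $\bigotimes \omega(X)$ and the equality of $l$-adic homological and numerical equivalence on that motive. By Theorem \ref{xx01}(1), the $\mathbb{Q}$-algebra $\bigl(\bigotimes \omega(X)\bigr)^{S}$ is generated by its degree-two component, so forming products of correspondences propagates both properties to every tensor power $X^{\otimes n}(r)$: every element of $\bigl(\omega(X)^{\otimes n}\bigr)^{S}$ is represented by an algebraic cycle, and $\sim_l$ coincides with $\sim_{\num}$ on $X^{\otimes n}(r)$.

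It remains to choose $P(X)$ so that, for $l \in P(X)$, the $l$-adic Tate classes in $\omega_l(X)^{\otimes n}(r)$ are precisely $\bigl(\omega(X)^{\otimes n}\bigr)^{S} \otimes_{\mathbb{Q}} \mathbb{Q}_l$ and not a strictly larger invariant subspace. The Tate classes are the invariants of the Zariski closure of $\langle \pi_X \rangle$ in $\GL(\omega_l(X))$; this closure always lies inside $S_{\mathbb{Q}_l}$, and equality holds precisely when $\pi_X / q^{m/2}$ generates a Zariski-dense subgroup of $S_{\mathbb{Q}_l}$. By a torus-Chebotarev argument of the kind used by Serre, Bogomolov, and Clozel, this occurs for a set of $l$ of positive density, and the set depends only on the isogeny class of the $\mathbb{Q}$-torus $S$, i.e.\ only on the $\mathbb{Q}$-algebra with positive involution $\mathbb{Q}[\pi_X]$, which in turn is read off from the centre of $\End(X_{\num})$ under the semisimplicity of $\pi_X$. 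This gives $P(X)$ with the claimed dependence.

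I expect the main obstacle to be this last step: the torus-Chebotarev argument must produce \emph{one} set of primes that works uniformly on \emph{all} tensor powers of $X$, and one must also verify that the resulting set is controlled by $Z(\End(X_{\num}))$ alone, rather than by finer data. The first two steps are formal consequences of Theorem \ref{xx01}(1), the semisimplicity of $\pi_X$, and the hypothesis on $X^{\otimes 2}(m)$; all the arithmetic content of the theorem sits in the third.
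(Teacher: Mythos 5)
Your reduction of full faithfulness to the coincidence of $l$-adic homological and numerical equivalence on $\langle X\rangle^{\otimes}$ is correct, and so is the observation that Theorem \ref{xx01}(a) makes every element of $(\bigotimes\omega(X))^{S}$ algebraic once the degree-two invariants are. But the third step, which you rightly identify as carrying all the arithmetic content, does not work, and the first two steps do not by themselves give the conclusion (per Remark \ref{xx07}, none of the conjectures is stable under tensor products, so ``forming products of correspondences'' does not propagate $\sim_{l}\,=\,\sim_{\mathrm{num}}$ to $X^{\otimes n}$). The fatal problem is that whether $\pi_X/q^{m/2}$ generates $S$ is a property of the algebraic number $\pi_X$ alone: the Zariski closure of the group it generates is an algebraic subgroup of $S$ defined over $\mathbb{Q}$ and is independent of $l$. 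Varying $l$ cannot change it, so no Chebotarev-type argument can produce a positive-density set of $l$ for which the Tate classes shrink down to the $S$-invariants. Indeed, if your route worked, it would prove the full Tate conjecture for all powers of $X$ for one $l$, hence by Theorem \ref{xx06}(e) for all $l$ --- e.g.\ for all powers of every $K3$ surface --- which is precisely the extra hypothesis (``$\pi_X$ regular'') that Theorems \ref{xx01}(b) and \ref{xx35}(a) require and that Theorem \ref{xx02} deliberately avoids.

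The paper proves $D$ without proving $T$. Write $V=\bar Q\otimes\omega(X^{\otimes n})$ and decompose it into weight spaces $V_{\chi}$ for $S$. Each $V_{\chi}$ is irreducible under the commutant of $S$ (Lemma \ref{xx25}), and the commutant preserves algebraic classes because the invariant algebra is generated in degree two (Lemma \ref{xx26}); hence each $V_{\chi}$ is either entirely algebraic or contains no algebraic classes. The Chebotarev condition on $l$ is different from yours: it asks that complex conjugation lie in the decomposition group of $l$ in the Galois closure of the centre of $\End(X_{\mathrm{num}})$ (Remark \ref{xx30}(a)), which supplies an involution $\iota_{Q}$ of $\bar Q/Q$ intertwining the positive involution of $\mathbb{Q}[\pi]$ and hence carrying $V_{\chi}$ to $V_{-\chi}$. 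Since the Galois action preserves the span of the algebraic classes, algebraic weight spaces come in pairs $\{\chi,-\chi\}$; combined with hard Lefschetz on the dual side, this makes the intersection pairing nondegenerate on the algebraic classes (Theorem \ref{xx27}), which is exactly $D$ and hence faithfulness. You should replace your third step with this mechanism; the dependence of $P(X)$ on the centre of $\End(X_{\mathrm{num}})$ then falls out of the splitting-field condition rather than from any generation property of $\pi_X$.
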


In fact, we prove a stronger statement (Theorem \ref{xx27} et seq.).

On applying these statements to $h^{1}$ of an abelian variety, we recover the
theorems of Clozel and the author mentioned above. On applying Theorem
\ref{xx01} to the motive of a $K3$ surface $V$ of height $1$, we find that the
Tate conjecture holds for all powers of $V$, thereby recovering a theorem of
Zarhin. On applying Theorem \ref{xx02} to the motive of a general $K3$ surface
$V$, we find that their exist infinitely many primes $l$ such that
$l$-homological equivalence coincides with numerical equivalence for all
powers of $V$.

\begin{aside}
\label{xx03}Our proof of Theorem \ref{xx01} follows the proof for abelian
varieties in \cite{milne1999}. However, our proof of Theorem \ref{xx02} is
simpler than the proofs for abelian varieties in \cite{clozel1999} and
\cite{deligne2009}; in particular, it avoids using the standard conjecture of
Lefschetz type.
\end{aside}

\begin{aside}
\label{xx04}There is the following general procedure for producing infinite
sequences of new theorems. Take a theorem about abelian varieties and restate
and prove it for motives. Now apply the motivic result to the motive of
\textit{any }variety to obtain a new theorem. In practice, this process does
not work well because much more is known to be true about abelian varieties
and their motives than about general motives. Nevertheless, in this article,
we show that it can yield new results.
\end{aside}

\subsection{Notation and Conventions}

All algebraic varieties are smooth and projective. Complex conjugation on
$\mathbb{C}{}$ and its subfields is denoted by $\iota$. The field of $q$
elements, $q$ a power of the prime $p$, is denoted by $\mathbb{F}{}_{q}$. The
symbol $l$ always denotes a prime number, possibly $p$. For a perfect field
$k$, we let $W(k)$ denote the ring of Witt vectors of $k$ and $B(k)$ the field
of fractions of $W(k)$. For tensor categories, we follow the conventions of
\cite{deligneM1982}.

\section{Statement of the Tate conjecture for motives}

\subsection{Categories of motives}

We refer the reader to \cite{scholl1994} for the basic formalism of motives.
In particular, a motive over a field $k$ is a triple $(V,e,m)$ with $V$ a
variety over $k$, $e$ an algebraic correspondence such that $e^{2}=e$, and $m$
an integer. When the K\"{u}nneth components $p_{i}$ of the diagonal are
algebraic, we use them to modify the commutativity constraint. We write
$h^{i}(V)(m)$ for $(V,p_{i},m)$. The category of motives over a field $k$
defined by an admissible equivalence relation $\sim$ is denoted by
$\Mot_{\sim}(k)$. It is a pseudo-abelian rigid tensor category with
$\End(\1)=\mathbb{Q}{}$, and if $V$ is of pure dimension $d$, then the motive
dual to $(V,e,m)$ is%
\[
(V,e,m)^{\vee}=(V,e^{t},d-m)
\]
(\cite{saavedra1972}, VI, 4.1.3.5). We let $\gamma$ denote the functor%
\[
X\rightsquigarrow\Hom(\1,X)\colon\Mot_{\sim}(k)\rightarrow\Vc_{\mathbb{Q}{}}.
\]
For example, if $X=(V,e,m)$, then $\gamma(X)=eA_{\mathrm{\sim}}^{m}(V)$
(algebraic cycles of codimension $m$ modulo $\sim$ with $\mathbb{Q}{}%
$-coefficients). Similarly, $\gamma(X^{\vee})=e^{t}A_{\mathrm{\sim}}^{d-m}%
(V)$, and the intersection product $A_{\mathrm{\sim}}^{m}(V)\times
A_{\mathrm{\sim}}^{d-m}(V)\rightarrow\mathbb{Q}$ defines an intersection
product
\[
\gamma(X)\times\gamma(X^{\vee})\rightarrow\mathbb{Q}{}.
\]

We write $\mathrm{rat}$ for rational equivalence, $\mathrm{num}$ for numerical
equivalence, and $\mathrm{hom}(l)$ (or just $l$) for the equivalence relation
defined by the standard $l$-adic Weil cohomology theory, We have categories of
motives and tensor functors%
\[
X\rightsquigarrow X_{l}\rightsquigarrow X_{\mathrm{num}}\colon
\Mot_{\mathrm{rat}}(\mathbb{F}{}_{q})\rightarrow\Mot_{\mathrm{hom}%
(l)}(\mathbb{F}{}_{q})\rightarrow\Mot_{\mathrm{num}}(\mathbb{F}{}_{q}).
\]
The category $\Mot_{\mathrm{num}}(\mathbb{F}{}_{q})$ is tannakian (Jamsen 1992).

\subsection{The Tate conjectures}

Let $X$ be a motive in $\Mot_{\mathrm{rat}}(\mathbb{F}{}_{q})$ and $l$ a prime
number. The standard $l$-adic Weil cohomology theory $V\rightsquigarrow
H_{l}^{\ast}(V)$ is crystalline cohomology if $l=p$ and \'{e}tale cohomology
if $l\neq p$. Let $Q$ denote its coefficient field; thus $Q=B(\mathbb{F}{}%
_{q})$ if $l=p$ and $\mathbb{Q}{}=\mathbb{Q}{}_{l}$ if $l\neq p$. Let
$\omega_{l}$ denote the tensor functor $\Mot_{\mathrm{rat}}(\mathbb{F}{}%
_{q})\rightarrow\Vc_{Q}$ defined by $H_{l}^{\ast}$. The vector space
$\omega_{l}(X)$ is equipped with a $Q$-linear Frobenius operator $\pi=\pi_{X}%
$. For example, if $l=p$, then $\omega_{l}(X)$ is an $F$-isocrystal, and
$\pi_{X}=F^{n}$, where $q=p^{n}$.

The functor $\omega_{l}$ defines a $\mathbb{Q}{}$-linear map%
\begin{equation}
\gamma(X)\overset{\textup{{\tiny def}}}{=}\Hom(\1,X)\overset{\omega
_{l}}{\longrightarrow}\Hom(\omega_{l}(\1),\omega_{l}(X))\simeq\omega_{l}(X),
\label{e11}%
\end{equation}
and we let $A_{l}(X)$ denote its image. Thus $A_{l}(X)$ is a $\mathbb{Q}{}%
$-subspace of $\omega_{l}(X)$ canonically isomorphic to $\gamma(X_{l})$. The
inclusion $A_{l}(X)\subset\omega_{l}(X)$ extends to a $Q$-linear map%
\begin{equation}
c\otimes a\mapsto ca\colon Q\otimes_{\mathbb{Q}{}}A_{l}(X)\rightarrow
\omega_{l}(X)^{\pi_{X}}. \label{e10}%
\end{equation}

Let $(X^{\vee},\ev)$ be the dual motive. The morphism $\ev\colon X^{\vee
}\otimes X\rightarrow\1$ induces a perfect pairing $\omega_{l}(X^{\vee}%
)\times\omega_{l}(X)\rightarrow Q$, compatible with the Frobenius maps, which
restricts to pairings%

\begin{equation}
\begin{tikzpicture}[scale=0.8,baseline=(current bounding box.center), text height=1.5ex, text depth=0.25ex] \node (a) at (0,0) {$\omega(X^{\vee})^{\pi}$}; \node (t) at (1.2,0) {$\times$}; \node (b) at (2.2,0) {$\omega(X)^{\pi}$}; \node (c) at (4.3,0) {$Q$}; \node (d) [below=0.7cm of a] {$A_l(X^{\vee})$}; \node [below=0.7cm of t] {$\times$}; \node (e) [below=0.7cm of b] {$A_l(X)$}; \node (f) [below=0.7cm of c] {$\mathbb{Q}$\rlap{.}}; \node (ad) [below=0.1cm of a] {\rotatebox{90}{$\subset$}}; \node (be) [below=0.1cm of b] {\rotatebox{90}{$\subset$}}; \node (cd) [below=0.1cm of c] {\rotatebox{90}{$\subset$}}; \draw[->,font=\scriptsize,>=angle 90] (b) edge (c) (e) edge (f); \end{tikzpicture} \label{e13}%
\end{equation}
The lower pairing is intersection product.

There are the following conjectural statements.

\begin{description}
[font=\normalfont]

\item[$T(X,l)$:] The map (\ref{e10}) is surjective, i.e., $Q{}\cdot
A_{l}(X)=\omega_{l}(X)^{\pi_{X}}$.

\item[$I(X,l)$:] The map $(\ref{e10})$ is injective, i.e., $Q{}\otimes
A_{l}(X)\hookrightarrow\omega_{l}(X)$.

\item[$D(X,l)$:] The right kernel $N(X)$ of the pairing $A_{l}(X^{\vee})\times
A_{l}(X)\rightarrow\mathbb{Q}{}$ is zero; equivalently, the map $\gamma
(X_{l})\rightarrow\gamma(X_{\mathrm{num}})$ is an isomorphism.

\item[$S(X,l)$:] The eigenvalue $1$ of $\pi_{X}$ on $\omega_{l}(X)$ is
semisimple (i.e., not a multiple root of the minimum polynomial of $\pi_{X}).$
\end{description}

\begin{remark}
\label{xx05}(a) When $X=h^{2j}(V)(j)$, these become the statements in
\cite{tate1994}.

(b) If $X$ is of weight $m\neq0$, then these statements are trivially true
because $\gamma(X_{l})=0=\gamma(X_{l}^{\vee})$ and $1$ is not an eigenvalue of
$\pi_{X}$ on $\omega_{l}(X)$.

(c) Let $X$ have weight $2m$. Once we have chosen an isomorphism $Q\rightarrow
Q(1)$, we can identify (\ref{e10}) with a map
\[
Q\otimes_{\mathbb{Q}{}}A_{l}(X(m))\rightarrow\omega_{l}(X(m))\simeq\omega
_{l}(X).
\]
Then $T(X(m),l)$ becomes the statement that $Q{}\cdot A_{l}(X)=\omega
_{l}(X)^{\pi_{X}/q^{m}}$.
\end{remark}

\begin{theorem}
\label{xx06}Let $X$ be a motive of weight $2m$ over $\mathbb{F}{}_{q}$, and
let $l$ be a prime number (possibly $p$). The following statements are equivalent:

\begin{enumerate}
\item $\dim_{\mathbb{Q}{}}A_{l}(X(m))/N=\dim_{Q}\omega_{l}(X)^{\pi/q^{m}}$;

\item $T(X(m),l)+D(X(m),l)$;

\item $T(X(m),l)+T(X^{\vee}(-m),l)+S(X(m),l);$

\item $T(X(m),l)+T(X^{\vee}(-m),l)+D(X(m),l)+D(X^{\vee}(-m),l)+I(X(m),l)\qquad
\qquad\qquad\linebreak\hspace*{5.6cm}+I(X^{\vee}(-m),l)+S(X(m),l)+S(X^{\vee
}(-m),l);$

\item the order of the pole of the zeta function $Z(X,t)$ at $t=q^{-m}$ is
equal to the dimension of the $\mathbb{Q}$-vector space $\gamma
(X(m)_{\mathrm{num}})$.
\end{enumerate}
\end{theorem}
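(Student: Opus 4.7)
Reduce to the weight-$0$ motive $X(m)$ and its dual $X^\vee(-m)$, and introduce five integer invariants: $\rho=\dim_{\mathbb{Q}}\gamma(X(m)_{\mathrm{num}})$, $a=\dim_{\mathbb{Q}}A_l(X(m))$, $b=\dim_Q Q\cdot A_l(X(m))$, $c=\dim_Q\omega_l(X)^{\pi/q^m}$, and $e=\mathrm{ord}_{t=q^{-m}}Z(X,t)$, the last equaling the algebraic multiplicity of the eigenvalue $1$ of $\pi_{X(m)}$ on $\omega_l(X(m))$. Write $\rho',a',b',c',e'$ for the corresponding invariants attached to $X^\vee(-m)$. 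Directly from the definitions, $b=c\Leftrightarrow T(X(m),l)$, $b=a\Leftrightarrow I(X(m),l)$, $\rho=a\Leftrightarrow D(X(m),l)$ and $c=e\Leftrightarrow S(X(m),l)$, and likewise for the primed quantities.

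Two inputs from duality tie the two sides together. The perfect Frobenius-equivariant pairing $\omega_l(X^\vee(-m))\times\omega_l(X(m))\to Q$ sends $\pi$-eigenvalues to their inverses, so $c=c'$ and $e=e'$; the perfect $\mathbb{Q}$-bilinear intersection pairing on numerical motives gives $\rho=\rho'$. The key uniform inequality is $\rho\le c$: choose $x_1,\dots,x_\rho\in A_l(X(m))$ lifting a $\mathbb{Q}$-basis of $A_l(X(m))/N$ and dual $y_1,\dots,y_\rho\in A_l(X^\vee(-m))$ with $\langle y_i,x_j\rangle=\delta_{ij}$ supplied by perfectness of the numerical pairing; any $Q$-linear relation $\sum q_j x_j=0$ in $\omega_l(X(m))$ pairs against $y_i$ to give $q_i=0$, so the $x_j$ remain $Q$-linearly independent in $\omega_l(X(m))^\pi$. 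Since the $x_j$ lie in $Q\cdot A_l(X(m))\subset\omega_l(X(m))^\pi$, this yields $\rho\le b\le c\le e$.

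The equivalences now collapse. (d) trivially implies each of (b), (c), (e), and $(e)\Leftrightarrow\rho=c=e$ collapses the chain entirely, giving every conjecture. For $(a)\Rightarrow(d)$: $\rho=c$ forces $\rho=b=c$ (hence $T$) and the $x_j$ form a $Q$-basis of $Q\cdot A_l(X(m))$; for any $n\in N$, expanding $n=\sum q_j x_j$ and pairing with $y_i$ yields $q_i=\langle y_i,n\rangle=0$, so $N=0$ (hence $D$), whence $a=\rho=b$ gives $I$. Interchanging $X(m)$ and $X^\vee(-m)$ produces $T',D',I'$. To extract $S$ under $T+T'+D+D'$: by $T$ and $T'$ the pairing $\omega_l(X^\vee(-m))^\pi\times\omega_l(X(m))^\pi\to Q$ is precisely the $Q$-bilinear extension of the $\mathbb{Q}$-valued numerical pairing, which $D+D'$ makes perfect; restricting $\pi_{X(m)}$ to the generalized $1$-eigenspace $V$ as a unipotent operator $u=1+N_V$, a direct linear-algebra computation shows that the restriction of a perfect duality $V\times V^\vee\to Q$ to the $u$- and $u^\vee$-fixed subspaces $\ker N_V$ and $(\operatorname{im} N_V)^\perp$ is perfect exactly when $V=\ker N_V\oplus\operatorname{im} N_V$, i.e.\ $N_V=0$, i.e.\ $S(X(m),l)$.

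The main technical step is this last one, extracting $S$ from $T+T'+D+D'$ via the unipotent-pairing lemma; the rest of the argument is dimension bookkeeping along the chain $\rho\le b\le c\le e$ and its primed counterpart.
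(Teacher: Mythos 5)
Your argument is in substance the same as the paper's, which simply applies the dimension count of Tate 1994, \S 2, to the diagram built from the pairings (\ref{e13}): your chain $\rho\le b\le c\le e$ and the unipotent-pairing lemma are exactly the content of that argument, and the steps you do carry out are correct (including obtaining $c=c'$, $e=e'$, $\rho=\rho'$ from the Frobenius-equivariant perfect pairing and the nondegeneracy of the numerical pairing, in place of the paper's appeal to hard Lefschetz).

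There is, however, a genuine gap in the logical structure: the implication graph you establish is $(a)\Rightarrow(d)$, $(d)\Rightarrow(b),(c),(e)$, and $(e)\Rightarrow(a)$, so (b) and (c) are only shown to be \emph{consequences} of the other statements; no implication out of (b) or out of (c) is proved, and without one the five statements are not equivalent. For (b) the repair is one line: $D$ gives $a=\rho$, which with $\rho\le b\le a$ forces $b=\rho$, and $T$ ($b=c$) then gives $\rho=c$, i.e.\ (a). For (c) the missing step is the substantive one, and it is exactly the unused direction of your own ``exactly when'' lemma: $S$ implies that the pairing $\omega_l(X^\vee(-m))^{\pi}\times\omega_l(X(m))^{\pi}\rightarrow Q$ is perfect; $T+T'$ identify these fixed spaces with $Q\cdot A_l(X^\vee(-m))$ and $Q\cdot A_l(X(m))$; and since the cohomological pairing restricts to the intersection pairing on algebraic classes, any $x\in N$ pairs to zero with all of $\omega_l(X^\vee(-m))^{\pi}$ and hence vanishes. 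This yields $D$ (and $D'$ by symmetry), hence (b) and then (a). You have all the tools on the table, but as written the proof stops short of closing the cycle through (b) and (c).
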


\begin{proof}
The hard Lefschetz theorem for $l$-adic cohomology shows that $\omega_{l}(X)$
and $\omega_{l}(X^{\vee})$ are isomorphic together with their Frobenius
operators, and so
\[
\omega_{l}(X)^{\pi_{X}/q^{m}}\approx\omega_{l}(X^{\vee})^{\pi_{X^{\vee}}%
/q^{m}}.
\]
In particular, they have the same dimension. We can now apply the arguments in
Tate 1994, \S 2, to the diagram%
\[
\begin{tikzcd}[column sep=small]
Q\otimes A_l(X(m)) \arrow{r}{b}\arrow{d}{a}
&\omega_l(X(m))^{\pi/{q^m}}\arrow{r}{c}
&(\omega_l(X(-m)^{\vee})^{\pi/{q^m}})^{\ast}\arrow{d}{d}\\
Q\otimes(A_l(X(m))/N)\arrow[hookrightarrow]{r}{f}
&Q\otimes\Hom_{\mathbb{Q}}(A_l(X(-m)^{\vee}),\mathbb{Q}) \arrow[hookrightarrow]{r}{e}%
&(Q\otimes A_l(X(-m)^{\vee}))^{\ast}.
\end{tikzcd}
\]
Here $(-)^{\ast}=\Hom_{Q}(-,Q)$, $b$ is the map (\ref{e10}), and $d$ is the
linear dual of the similar map for $X^{\vee}$. The maps $e$ and $f$ are
defined by the pairings in (\ref{e13}), and the maps $a$ and $e$ are obvious.
\end{proof}

Statement (e) of the theorem is independent of $l$, and so%
\[
T(X(m),l)+D(X(m),l)\text{ for one }l\iff T(X(m),l)+D(X(m),l)\text{ for all
}l\text{.}%
\]
We call $T(X(m),l)$ the \textit{Tate conjecture} (for $X(m),l)$, and
equivalent statements of the theorem the \textit{full Tate conjecture} (for
$X(m))$.

\begin{remark}
\label{xx07}All four conjectures are stable under passage to direct sums,
direct summands, and duals (but not tensor products or Tate twists).

For example, we show that $T(X,l)$ implies $T(X^{\vee},l)$. Because $T$ is
stable under passage to direct sums and direct summands, it suffices to do
this for $X=h^{2i}(V)(i)$. From the hard Lefschetz theorem for the $l$-adic
Weil cohomology, we get a diagram%
\[
\begin{tikzcd}
Q\otimes A_{l}(X)\arrow{r}\arrow{d}{\textrm{onto}}
&Q\otimes A_{l}(X^{\vee})\arrow{d}\\
H_{l}^{2i}(V)(i)^{\pi}\arrow{r}{\simeq}&H_{l}^{2d-2i}(V)(d-i)^{\pi},
\end{tikzcd}
\]
from which the statement follows.

Let $X$ be a motive; if one of the conjectures holds for all motives
$X^{\otimes r}(m)$, $r\in\mathbb{N}{}$, $m\in\mathbb{Z}{}$, of weight $0$,
then it holds for all objects of $\langle X\rangle^{\otimes}$.
\end{remark}

\subsection{Characteristic polynomials}

Let $(\mathsf{C},\otimes)$ be a pseudo-abelian rigid tensor category with
$\End(\1)=\mathbb{Q}$. Every object $X$ of $\mathsf{C}$ admits a dual
$(X^{\vee},\ev)$, and we have maps%
\[
\underline{\Hom}(X,X)\simeq X^{\vee}\otimes X\overset{\ev}{\longrightarrow
}\1.
\]
On applying the functor $\Hom(\1,\,\,)$ to the composite of these maps, we
obtain the trace map%
\[
\alpha\mapsto\Tr(\alpha|X)\colon\End(X)\rightarrow\End(\1)=\mathbb{Q}{}.
\]
The $\rank$ of $X$ is defined to be $\Tr(\id_{X}|X)$. These constructions
commute with tensor functors. If $(\mathsf{C},\otimes)$ is tannakian, then,
for every fibre functor $\omega\colon\mathsf{C}\rightarrow\Vc_{Q}$ ($Q$ a
field containing $\mathbb{Q}{}$), we have
\begin{align*}
\Tr(\alpha|X)  &  =\Tr(\omega(\alpha)\mid\omega(X))\\
\rank(X)  &  =\dim_{Q}(\omega(X))\in\mathbb{N}{}.
\end{align*}

From now on, we assume that $(\mathsf{C},\otimes)$ admits a tensor functor to
a tannakian category with $\End(\1)=\mathbb{Q}{}$. For an object $X$ and
integer $r\geq0{}$, there is a morphism%
\[
a(r)=\sum_{\sigma\in S_{r}}\mathrm{sgn}(\sigma)\cdot\sigma\colon X^{\otimes
r}\rightarrow X^{\otimes r}\text{,}\quad S_{r}=\text{symmetric group.}%
\]
Then $a(r)/r!$ is an idempotent in $\End(X^{\otimes r})$, and we define
$\bigwedge\nolimits^{r}X$ to be its image. The characteristic polynomial of an
endomorphism $\alpha$ of $X$ is
\[
P_{\alpha}(X,t)=c_{0}+c_{1}t+\cdots+c_{r-1}t^{r-1}+t^{r},\quad r=\rank(X),
\]
where%
\[
c_{r-i}=(-1)^{i}\Tr(\alpha\mid\bigwedge\nolimits^{i}X)=(-1)^{i}\Tr(\frac
{a(i)}{i!}\circ\alpha^{\otimes i}\mid X^{\otimes i}).
\]
This definition commutes with tensor functors.

\section{Weil numbers and tori}

\subsection{Invariants of torus actions}

Let $Q$ be a field of characteristic zero with algebraic closure
$Q^{\mathrm{al}}$, and let $S$ be a group of multiplicative type over $Q$
acting on a $Q$-vector space $V$. For a character $\chi$ of $S$, let $V_{\chi
}$ denote the subspace of $V_{Q^{\mathrm{al}}}$ on which $S$ acts through
$\chi$. Then
\[
V_{Q^{\mathrm{al}}}=\bigoplus_{\chi\in X^{\ast}(S)}V_{\chi},\quad X^{\ast
}(S)\overset{\textup{{\tiny def}}}{=}\Hom_{Q^{\mathrm{al}}}(S,\mathbb{G}%
_{m}),
\]
and the $\chi$ for which $V_{\chi}\neq0$ are called the weights of $S$ in $V$.

\begin{lemma}
\label{xx08}If the weights of $S$ in $V$ can be numbered $\xi_{1},\ldots
,\xi_{2m},\ldots,\xi_{2m+r}$ in such a way that the $\mathbb{Z}{}$-module of
relations among the $\xi_{i}$ is generated by the relations%
\begin{align*}
\xi_{i}+\xi_{m+i}  &  =0,\quad i=1,\ldots,m,\\
2\xi_{2m+i}  &  =0,\quad i=1,\ldots,r,
\end{align*}
then $\left(  \bigotimes V\right)  ^{S}$ is generated as a $Q$-algebra by
$(V^{\otimes2})^{S}$.
\end{lemma}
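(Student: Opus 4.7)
The plan is to base-change to the algebraic closure $Q^{\mathrm{al}}$ and reduce to a combinatorial claim about multisets of weights that sum to zero in $X^{\ast}(S)$. The hypothesis on the relation module is exactly what lets one pair off any such multiset into the two prescribed types of zero-sum pairs, each providing an element of $(V^{\otimes 2})^S$.

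First I would extend scalars to $Q^{\mathrm{al}}$. Formation of $S$-invariants commutes with flat base change, and generation of a sub-$Q$-algebra by a given set is invariant under field extension, so it suffices to prove $(\bigotimes V_{Q^{\mathrm{al}}})^S = Q^{\mathrm{al}}\bigl[(V^{\otimes 2}_{Q^{\mathrm{al}}})^S\bigr]$. Over $Q^{\mathrm{al}}$ one has the weight decomposition $V = \bigoplus_{i=1}^{2m+r} V_{\xi_i}$, and the $S$-action on $V^{\otimes n}$ is diagonal with weight $\xi_{i_1}+\cdots+\xi_{i_n}$ on the block $V_{\xi_{i_1}}\otimes\cdots\otimes V_{\xi_{i_n}}$, so
\[
(V^{\otimes n})^S \;=\; \bigoplus_{\xi_{i_1}+\cdots+\xi_{i_n}=0} V_{\xi_{i_1}}\otimes\cdots\otimes V_{\xi_{i_n}}.
\]
It therefore suffices to show that each such weight-zero block lies in the subalgebra generated by $(V^{\otimes 2})^S$.

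The combinatorial core of the argument is the following. Fix a tuple $(i_1,\ldots,i_n)$ with $\sum_k\xi_{i_k}=0$ and write $n_i$ for the multiplicity of the index $i$. The relation $\sum_i n_i\,\xi_i=0$ in $X^{\ast}(S)$ must, by hypothesis, be a $\mathbb{Z}$-combination of the prescribed generators $\xi_i+\xi_{m+i}$ and $2\xi_{2m+j}$; comparing coefficients forces $n_i=n_{m+i}$ for $i\le m$ and $n_{2m+j}\in 2\mathbb{Z}$ for $j\le r$. Hence the multiset $\{\xi_{i_1},\ldots,\xi_{i_n}\}$ admits a partition into pairs of the two shapes $(\xi_i,\xi_{m+i})$ and $(\xi_{2m+j},\xi_{2m+j})$, each summing to zero; and each such pair contributes a block of $(V^{\otimes 2})^S$, namely one of $V_{\xi_i}\otimes V_{\xi_{m+i}}$, $V_{\xi_{m+i}}\otimes V_{\xi_i}$, or $V_{\xi_{2m+j}}\otimes V_{\xi_{2m+j}}$.

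The step I expect to be the main obstacle is reassembling these pair blocks into the prescribed ordered block $V_{\xi_{i_1}}\otimes\cdots\otimes V_{\xi_{i_n}}$: since $\bigotimes V$ is non-commutative, direct concatenation of pair invariants reaches only those index sequences that are themselves concatenations of matched pairs. The resolution is to appeal to the commutativity constraint on the ambient symmetric tensor category (in the motivic application, the K\"unneth-modified constraint underlying the tensor product on motives), which permits transposing adjacent tensor factors while staying inside the subalgebra generated by $(V^{\otimes 2})^S$. By iterating such transpositions one realizes any ordering compatible with the multiset decomposition, thereby exhibiting $V_{\xi_{i_1}}\otimes\cdots\otimes V_{\xi_{i_n}}$ as a $Q^{\mathrm{al}}$-linear combination of products of elements of $(V^{\otimes 2})^S$ and finishing the proof.
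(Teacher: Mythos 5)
Your reduction to $Q^{\mathrm{al}}$, the decomposition of $(V^{\otimes n})^{S}$ into weight-zero blocks, and the deduction of $n_{i}=n_{m+i}$ and $2\mid n_{2m+j}$ from the hypothesis on the relation module reproduce the paper's argument exactly; the combinatorial core is the same. The one point where you go beyond the paper is the ordering of the tensor factors. The paper silently writes each isotypic piece in sorted form, $\bigotimes_{i\leq m}(V_{\xi_{i}}\otimes V_{\xi_{m+i}})^{\otimes n_{i}}\otimes\bigotimes_{j\leq r}(V_{\xi_{2m+j}}^{\otimes2})^{\otimes n_{2m+j}/2}$, which is visibly a product of elements of $(V^{\otimes2})^{S}$, and does not discuss the unsorted blocks; you are right to notice that for the (noncommutative) tensor algebra these blocks are genuinely present in $(V^{\otimes n})^{S}$ and are not concatenations of matched pairs.

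However, your resolution fails as stated: the subalgebra $Q[(V^{\otimes2})^{S}]$ of the tensor algebra is \emph{not} stable under transpositions of adjacent factors, so you cannot transpose ``while staying inside'' it. Concretely, with $m\geq2$ the block $V_{\xi_{1}}\otimes V_{\xi_{m+1}}\otimes V_{\xi_{2}}\otimes V_{\xi_{m+2}}$ lies in $Q[(V^{\otimes2})^{S}]$, but swapping the two middle factors yields $V_{\xi_{1}}\otimes V_{\xi_{2}}\otimes V_{\xi_{m+1}}\otimes V_{\xi_{m+2}}$, whose consecutive pairs no longer have weight zero (indeed $\xi_{1}+\xi_{2}\neq0$, since that relation is not in the lattice generated by the prescribed ones), and hence is not contained in any sum of products of the degree-$2$ generators. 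The correct repair --- and the way the lemma is actually used in Theorem \ref{xx24} and Lemma \ref{xx26} --- is that the receiving algebra of algebraic classes in $\bigoplus_{n}\omega(X^{\otimes n})$ is stable under the symmetric-group actions, because permutations of tensor factors are algebraic correspondences; it therefore suffices that the \emph{sorted} block of each weight-zero type be a product of elements of $(V^{\otimes2})^{S}$, which is exactly the paper's computation. If you want the abstract lemma to cover all orderings literally, you should either formulate it for a permutation-stable subalgebra (or the symmetric algebra), or state the conclusion as: each weight-zero block becomes such a product after a permutation of its factors.
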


\begin{proof}
Clearly $Q[(V^{\otimes2})^{S}]\subset(\bigotimes V)^{S}$. It suffices to prove
that the two become equal after tensoring with $Q^{\mathrm{al}}$ (because
forming invariants commutes with passing to a field exension). Therefore, we
may suppose that $Q$ is algebraically closed. Fix an integer $n\geq1$. For a
$2m+r$-tuple $\Sigma=(n_{1},\ldots,n_{2m},\ldots,n_{2m+r})$ of nonnegative
integers with sum $n$, let $[\Sigma]$ denote the character $\sum_{i=1}%
^{2m+r}n_{i}\xi_{i}$ of $S$, and let $V(\Sigma)=\bigotimes_{i=1}^{2m+r}%
V_{\xi_{i}}^{\otimes n_{i}}$. Then $V^{\otimes n}=\bigoplus_{\Sigma}V(\Sigma
)$, and $S$ acts on $V(\Sigma)$ through the character $[\Sigma]$. Therefore,
$(V^{\otimes n})^{S}=\bigoplus_{[\Sigma]=0}V(\Sigma)$. By assumption, a
character $[\Sigma]$ is zero if and only if%
\[%
\begin{array}
[c]{ll}%
n_{i}=n_{m+i}, & i=1,\ldots,m,\\
n_{i}\text{ is even} & i=1,\ldots,r.
\end{array}
\]
and so
\[
\lbrack\Sigma]=0\implies V(\Sigma)=\bigotimes_{i=1}^{m}(V_{\xi_{i}}\otimes
V_{\xi_{m+i}})^{\otimes n_{i}}\otimes\bigotimes_{i=1}^{r}(V_{2m+i}^{\otimes
2})^{\otimes n_{i}/2}\text{.}%
\]
But $V_{\xi_{i}}\otimes V_{\xi_{m+i}}\subset(V^{\otimes2})^{S}$ because
$\xi_{i}+\xi_{m+i}=0$ in $X^{\ast}(T)$, and $V_{2m+i}^{\otimes2}%
\subset(V^{\otimes2})^{S}$ because $2\xi_{2m+i}=0$. It follows that
$(\bigotimes^{n}V)^{S}\subset Q$ $[(V^{\otimes2})^{S}]$.
\end{proof}

An involution $a\mapsto a^{\prime}$ of a $\mathbb{Q}{}$-algebra $F$ is said to
be positive if $\Tr_{F/\mathbb{Q}}(a\,a^{\prime})>0$ for all nonzero $a$ in
$F$. A commutative finite $\mathbb{Q}{}$-algebra $F$ admits a positive
involution if and only if it is a product of totally real fields and CM
fields, in which case, the involution acts as the identity map on each real
factor and as complex conjugation on each CM factor; in particular, it is unique.

\begin{proposition}
\label{xx09}Let $(F,\,^{\prime})$ be a finite commutative $\mathbb{Q}{}%
$-algebra with positive involution, and let $S$ denote the algebraic group (of
multiplicative type) such that%
\[
S(R)=\{a\in(F\otimes R)^{\times}\mid a\cdot a^{\prime}=1\},\quad R\text{ a
}\mathbb{Q}{}\text{-algebra.}%
\]
Let $Q$ be a field containing $\mathbb{Q}{}$, and let $V$ be a free
$F\otimes_{\mathbb{Q}{}}Q$-module of finite rank. Then $(\bigotimes V)^{S}$ is
generated as a $Q$-algebra by $(V^{\otimes2})^{S}$:%
\[
(\bigotimes V)^{S}=Q[(V^{\otimes2})^{S}].
\]

\end{proposition}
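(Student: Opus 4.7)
The plan is to deduce Proposition \ref{xx09} directly from Lemma \ref{xx08}. Since the formation of invariants commutes with extension of scalars, we may pass to the algebraic closure of $Q$ and assume $Q$ is algebraically closed. The core task is then to compute the weights of $S$ on $V$ and to verify that the relations among them have the precise form demanded by the lemma.

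For this we exploit the structure of $F$: the existence of a positive involution forces $F$ to be a product of totally real fields (on which the involution is trivial) and CM fields (on which it is complex conjugation). Writing $\Phi = \Hom(F, Q)$, the involution of $F$ induces an action $\varphi \mapsto \bar\varphi$ on $\Phi$, whose fixed points are exactly the embeddings of the real factors. Let $F^+ \subset F$ be the subalgebra of ${}^{\prime}$-fixed elements, and realize $S$ as the kernel of the norm map $a \mapsto a a^{\prime}$ from $T = \Res_{F/\mathbb{Q}}\mathbb{G}_m$ to $T_{+} = \Res_{F^+/\mathbb{Q}}\mathbb{G}_m$. We obtain a short exact sequence
\[
0 \to X^*(T_+) \to X^*(T) \to X^*(S) \to 0,
\]
where $X^*(T) = \bigoplus_{\varphi \in \Phi} \mathbb{Z}e_\varphi$ and $X^*(T_+)$ is the sublattice generated by the $e_\varphi + e_{\bar\varphi}$, $\varphi \in \Phi$. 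Thus $X^*(S)$ has a presentation by generators $\bar e_\varphi$ ($\varphi \in \Phi$) modulo the relations $\bar e_\varphi + \bar e_{\bar\varphi} = 0$.

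Because $V$ is free over $F \otimes Q = \prod_{\varphi} Q$, it decomposes as $V = \bigoplus_\varphi V_\varphi$, and $S$ acts on $V_\varphi$ through the character $\bar e_\varphi$; hence the distinct weights of $S$ on $V$ are exactly the $\bar e_\varphi$. Numbering the complex conjugate pairs as $(\varphi_i, \bar\varphi_i)$ for $i = 1,\ldots,m$ and the real embeddings as $\varphi_{2m+1},\ldots,\varphi_{2m+r}$, and setting $\xi_j = \bar e_{\varphi_j}$, the presentation above displays the module of relations among the $\xi_j$ as generated by $\xi_i + \xi_{m+i} = 0$ and $2\xi_{2m+i} = 0$. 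These are exactly the relations hypothesized in Lemma \ref{xx08}, which then yields the proposition. The only mildly delicate point is checking that the norm $T \to T_+$ is surjective so that the sequence of character lattices is exact on both ends; this is immediate after decomposing $F$ into its real and CM factors and inspecting the norm on each component. Beyond that, the argument is pure bookkeeping.
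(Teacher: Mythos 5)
Your proof is correct and follows essentially the same route as the paper: both reduce to Lemma \ref{xx08} by computing $X^{\ast}(S)$ and identifying the weights of $S$ on $V$ with the embeddings $\varphi\in\Hom(F,Q^{\mathrm{al}})$, subject to the relations $\xi_{\varphi}+\xi_{\bar{\varphi}}=0$ (which become $2\xi_{\varphi}=0$ on the real factors). The only difference is presentational --- you treat the product uniformly via the exact sequence of character lattices coming from the norm $T\to T_{+}$, whereas the paper handles the CM and totally real factors separately --- and your indexing $\xi_{j}=\bar{e}_{\varphi_{j}}$ should explicitly set $\varphi_{m+i}=\bar{\varphi}_{i}$, but this is harmless bookkeeping.
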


\begin{proof}
If $F$ is a CM field, then $S$ is a torus. Every embedding $\sigma\colon
F\hookrightarrow Q^{\mathrm{al}}$ defines a character $\xi_{\sigma}$ of $S$,
and the character group of $S$ is the quotient of $\bigoplus\mathbb{Z}{}%
\xi_{\sigma}$ by the subgroup generated by the elements $\xi_{\sigma}%
+\xi_{\iota\circ\sigma}$ (recall that $\iota$ denotes complex conjugation). As
$V$ is a free $F\otimes Q$-module, the weights of $S$ on $V\otimes
_{Q}Q^{\mathrm{al}}$ are precisely the characters $\xi_{\sigma}$, $\sigma
\in\Hom(F,Q^{\mathrm{al}})$, and each has the same multiplicity. Choose a CM
type $\{\varphi_{1},\ldots,\varphi_{m}\}$ for $F$. Then the character group of
$S$ has generators $\{\xi_{\varphi_{1}},\ldots,\xi_{\varphi_{m}},\xi
_{\iota\circ\varphi_{1}},\ldots,\xi_{\iota\circ\varphi_{m}}\}$ and defining
relations%
\[
\xi_{\varphi_{i}}+\xi_{\iota\circ\varphi_{i}}=0,\quad i=1,\ldots,m.
\]
Thus the statement follows from the lemma.

If $F$ is a totally real field, then $S=\mu_{2}$, and the element $-1$ of
$\mu_{2}(\mathbb{Q}{})$ acts on $V$ as multiplication by $-1$. Therefore
$V_{\mathbb{Q}{}^{\mathrm{al}}}=V_{\xi}$ where $\xi$ is the nonzero element of
$X^{\ast}(S)$. Therefore $X^{\ast}(S)$ has generator $\xi$ and defining
relation $2\xi=0$. Thus the statement follows from the lemma.

In the general case, $F$ is a product of CM fields and totally real fields,
and the same arguments as in the last two paragraphs apply.
\end{proof}

\begin{remark}
\label{xx10}Let $(F,^{\prime})$ be as in \ref{xx09}. It follows from the
definition of $S$, that the involution $a\mapsto a^{\prime}$ acts on the
algebraic group $S$ by sending each element to its inverse. Therefore, it acts
on $X^{\ast}(S)$ as $-1$.
\end{remark}

\subsection{Application to the tori attached to Weil numbers}

By an algebraic number, we mean an element of a field algebraic over
$\mathbb{Q}{}$. We let $\mathbb{Q}{}^{\mathrm{al}}$ denote the algebraic
closure of $\mathbb{Q}{}$ in $\mathbb{C}{}$.

\begin{definition}
\label{xx11}An algebraic number $\pi$ is said to be a Weil\emph{ }$q$-number
of weight $m$ if

\begin{enumerate}
\item for every embedding $\rho\colon\mathbb{Q}{}[\pi]\hookrightarrow
\mathbb{Q}^{\mathrm{al}}$, $|\rho(\pi)|=q^{m/2};$

\item for some $n$, $q^{n}\pi$ is an algebraic integer.
\end{enumerate}
\end{definition}

Let $\pi$ be Weil $q$-number of weight $m$. Condition (a) implies that
$\pi\mapsto\pi^{\prime}=q^{m}/\pi$ defines an involution $\alpha\mapsto
\alpha^{\prime}$ of $\mathbb{Q}{}[\pi]$ such that $\rho(\alpha^{\prime}%
)=\iota(\rho(\alpha))$ for all embeddings $\rho\colon\mathbb{Q}{}%
[\pi]\hookrightarrow\mathbb{Q}{}^{\mathrm{al}}$. Therefore $\mathbb{Q}{}[\pi]$
is a CM-field or a totally real field (according as $\pi^{\prime}\neq\pi$ or
$\pi^{\prime}=\pi$).

\begin{plain}
\label{xx12}Let $\mathbb{Q}{}[\pi]$ be a finite $\mathbb{Q}{}$-algebra that
can be written as a product of fields
\[
\mathbb{Q}{}[\pi]=\prod\nolimits_{i}\mathbb{Q}{}[\pi_{i}],\quad\pi
\leftrightarrow(\pi_{1},\ldots,\pi_{r}),
\]
with each $\pi_{i}$ a Weil $q$-number of weight $m$. Let $^{\prime}$ be the
(unique) positive involution of $\mathbb{Q}{}[\pi]$ and $S$ the algebraic
group attached to $(\mathbb{Q}{}[\pi],\,^{\prime})$ as in \ref{xx09}. Note
that $\pi/q^{m/2}\in S(\mathbb{Q}{}^{\mathrm{al}})$. We say that $\pi/q^{m/2}$
\emph{generates} $S$ if there does not exist a proper algebraic subgroup $H$
of $S$ such that $\pi/q^{m/2}\in H(\mathbb{Q}{}^{\mathrm{al}})$. Note that, if
$S_{i}$ is the group attached to $\mathbb{Q}{}[\pi_{i}]$, then $S=\prod S_{i}%
$, and $\pi/q^{m/2}$ generates $S$ if and only if $\pi_{i}/q^{m/2}$ generated
$S_{i}$ for all $i$.
\end{plain}

\begin{proposition}
\label{xx13}Let $Q$ be a field containing $\mathbb{Q}{}$, and let $V$ be a
free $\mathbb{Q}{}[\pi]\otimes_{\mathbb{Q}{}}Q$-module of finite rank. If
$\pi/q^{m/2}$ generates $S$, then%
\[
(\bigotimes V)^{\pi/q^{m/2}}=Q[(V^{\otimes2})^{S}].
\]

\end{proposition}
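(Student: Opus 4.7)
The plan is to reduce the statement to Proposition \ref{xx09} by showing that, under the generation hypothesis, taking invariants under the single element $\pi/q^{m/2}$ gives the same subspace as taking invariants under all of $S$. Since the formation of invariants commutes with extensions of the base field, I would first base-change to $Q^{\mathrm{al}}$ (enlarged if necessary to contain $\pi$ and $q^{m/2}$), where the action of $S$ diagonalises into weight spaces.

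The key step is the following characterisation. Decompose
\[
V \otimes_Q Q^{\mathrm{al}} = \bigoplus_{\chi \in X^{\ast}(S)} V_\chi,
\]
and correspondingly decompose each $V^{\otimes n}\otimes Q^{\mathrm{al}}$ into weight spaces. On a weight space $V(\chi)$, the element $\pi/q^{m/2} \in S(Q^{\mathrm{al}})$ acts by the scalar $\chi(\pi/q^{m/2})$. Hence a vector in $V(\chi)$ is fixed by $\pi/q^{m/2}$ precisely when $\pi/q^{m/2} \in \ker(\chi)$. Because $\ker(\chi)$ is an algebraic subgroup of $S$, the generation hypothesis forces $\ker(\chi)=S$, i.e.\ $\chi=0$. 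Therefore
\[
\bigl(V^{\otimes n}\otimes Q^{\mathrm{al}}\bigr)^{\pi/q^{m/2}} = \bigl(V^{\otimes n}\otimes Q^{\mathrm{al}}\bigr)^{S}
\]
for every $n\geq 0$, and summing over $n$ yields $(\bigotimes V)^{\pi/q^{m/2}} = (\bigotimes V)^{S}$ after descending from $Q^{\mathrm{al}}$ back to $Q$.

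Having identified the two invariant subspaces, the conclusion is immediate: $V$ is free over $\mathbb{Q}[\pi]\otimes_{\mathbb{Q}}Q$ by hypothesis, and $(\mathbb{Q}[\pi],\,^{\prime})$ carries the positive involution $\pi\mapsto q^m/\pi$, so Proposition \ref{xx09} applies and gives $(\bigotimes V)^{S} = Q[(V^{\otimes 2})^{S}]$.

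I do not expect a serious obstacle here: the proposition is essentially a repackaging of Proposition \ref{xx09} together with the elementary fact that a character of a group of multiplicative type is determined by its kernel as an algebraic subgroup. The only point that requires mild care is the descent from $Q^{\mathrm{al}}$ to $Q$ at the end — one has to check that the subspace $(\bigotimes V)^{\pi/q^{m/2}}$, which a priori is defined only after adjoining $\pi$ and $q^{m/2}$, is actually defined over $Q$; this follows because it coincides with $(\bigotimes V)^S$, which is patently $Q$-rational as $S$ is defined over $\mathbb{Q}$.
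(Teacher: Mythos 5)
Your proposal is correct and follows essentially the same route as the paper: both reduce to Proposition \ref{xx09} by proving $(\bigotimes V)^{\pi/q^{m/2}}=(\bigotimes V)^{S}$, with the nontrivial inclusion coming from the fact that the generation hypothesis rules out any proper algebraic subgroup of $S$ containing $\pi/q^{m/2}$. The only (immaterial) difference is that the paper applies this to the stabilizer in $\GL_{V}$ of a given fixed vector, whereas you apply it to the kernels of characters via the weight-space decomposition over $Q^{\mathrm{al}}$.
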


\begin{proof}
According to Proposition \ref{xx09}, $(\bigotimes V)^{S}=Q[(V^{\otimes2}%
)^{S}]$, and so it remains to show that $(\bigotimes V)^{\pi/q^{m/2}%
}=(\bigotimes V)^{S}$. As $\pi/q^{m/2}\in S(\mathbb{Q}^{\mathrm{al}}{})$,
certainly $(\bigotimes V)^{\pi/q^{m/2}}\supset(\bigotimes V)^{S}$. Conversely,
let $v\in$ $(\bigotimes V)^{\pi/q^{m/2}}$, and let $H$ be the algebraic
subgroup of $\GL_{V}$ of elements fixing $v$. Then $\pi/q^{m/2}\in
H(\mathbb{Q}^{\mathrm{al}})$, and so $S\subset H$ because $\pi/q^{m/2}$
generates $S$. We conclude that $S$ fixes $v.$
\end{proof}

\begin{aside}
\label{xx14}Let $\mathbb{Q}{}[\pi]$ and $S$ be as in \ref{xx12}, and assume
that $\pi$ has no real conjugates. Let $\pi_{1},\iota\pi_{1},\ldots,\pi
_{g},\iota\pi_{g}$ be the conjugates of $\pi$ in $\mathbb{Q}{}^{\mathrm{al}}$.
The following conditions are equivalent:

\begin{enumerate}
\item $S$ is generated by $\pi/q^{m/2}$;

\item if%
\[
\pi_{1}^{n_{1}}\cdots\pi_{g}^{n_{g}}=q^{n},\quad n_{i}\text{, }n\in
\mathbb{Z}{},
\]
then $n_{1}=\cdots=n_{g}=0=n$;

\item if
\[
\pi_{1}^{n_{1}}\cdot\iota\pi_{1}^{n_{1}^{\prime}}\cdot\cdots\cdot\pi
_{g}^{n_{g}}\cdot\iota\pi_{g}^{n_{g}^{\prime}}=q^{n},\quad n_{i}%
,\,n_{i}^{\prime},\,n\in\mathbb{Z}{},
\]
then $n_{i}=n_{i}^{\prime}$ for all $i$ and $m(n_{1}+\cdots+n_{g})=n$.
\end{enumerate}
\end{aside}

\subsection{Weil numbers $\pi$ such that $\pi/q^{m/2}$ generates $S$.}

Let $\mathbb{Q}{}[\pi]=\prod\mathbb{Q}{}[\pi_{i}]$ and $S$ be as in
\ref{xx12}. We give some examples where $\pi/q^{m/2}$ generates $S$ (following
\cite{kowalski2006}, \cite{lenstraZ1993}, and \cite{zarhin1991}).

We (ambiguously) write $\iota$ for the unique positive involution of
$\mathbb{Q}{}[\pi]$ (so $\rho\circ\iota=\iota\circ\rho$ for all $\rho
\colon\mathbb{Q}{}[\pi]\rightarrow\mathbb{Q}{}^{\mathrm{al}}$). By a $p$-adic
prime of $\mathbb{Q}{}[\pi]$ we mean a prime ideal of the integral closure of
$\mathbb{Z}{}$ in $\mathbb{Q}{}[\pi]$ lying over $p$. Thus $\mathbb{Q}{}%
_{p}\otimes_{\mathbb{Q}{}}\mathbb{Q}{}[\pi]\simeq\prod\mathbb{Q}{}[\pi]_{v}$
where $v$ runs over the $p$-adic primes of $\mathbb{Q}{}[\pi]$ and each field
$\mathbb{Q}{}[\pi]_{v}$ is the completion of some $\mathbb{Q}{}[\pi_{i}]$ at a
$p$-adic prime of it. The degree of $v$ is $[\mathbb{Q}{}[\pi]_{v}%
\colon\mathbb{Q}{}_{p}]$.

\begin{example}
\label{xx15}Assume that there exists a $p$-adic prime $v_{1}$ of $\mathbb{Q}%
{}[\pi]$ of degree $1$ and such that
\[
\frac{\ord_{v}(\pi)}{\ord_{v}(q^{m})}=\left\{
\begin{array}
[c]{ll}%
0 & \text{if }v=v_{1}\\
1/2 & \text{if }v\neq v_{1},\iota v_{1}\\
1 & \text{if }v=\iota v_{1}.
\end{array}
\right.
\]
For a homomorphism $\rho\colon\mathbb{Q}{}[\pi]\rightarrow\mathbb{Q}%
{}^{\mathrm{al}}$ and a $p$-adic prime $w$ of $\mathbb{Q}{}^{\mathrm{al}}$, we
have
\[
\frac{\ord_{w}(\rho(\pi/q^{m/2}))}{\ord_{w}(q^{m})}=\left\{
\begin{array}
[c]{rl}%
-1/2 & \text{if }\rho^{-1}(w)=v_{1}\\
0 & \text{if }\rho^{-1}(w)\neq v_{1},\iota v_{1}\\
1/2 & \text{if }\rho^{-1}(w)=\iota v_{1}\text{.}%
\end{array}
\right.
\]
Let $\chi=\sum n(\rho)\rho$ be a character of $(\mathbb{G}_{m})_{\mathbb{Q}%
{}[\pi]/\mathbb{Q}{}}$. For a $p$-adic prime $w$ of $\mathbb{Q}{}%
^{\mathrm{al}}$,%
\[
\ord_{w}(\chi(\pi/q^{m/2}))=\frac{\ord_{v_{1}}(q^{m})}{2}(-n(\rho
)+n(\iota\circ\rho)),
\]
where $\rho$ is the unique embedding of $\mathbb{Q}{}[\pi]$ into $\mathbb{Q}%
{}^{\mathrm{al}}$ such that $\rho^{-1}(w)=v_{1}$ (the uniqueness uses that
$v_{1}$ has degree $1$). Therefore, $\chi(\pi/q^{m/2})=1$ if and only if
$\chi=\iota\circ\chi$, i.e., if and only if $\chi$ is trivial on $S$. This
shows that $\pi/q^{m/2}$ generates $S$.
\end{example}

\begin{example}
\label{xx16}Assume that there exists a $p$-adic prime $v_{1}$ of $\mathbb{Q}%
{}[\pi]$ with decomposition group $\{1,\iota\}$ and such that
\[
\frac{\ord_{v}(\pi)}{\ord_{v}(q^{m})}=\left\{
\begin{array}
[c]{ll}%
1/2 & \text{if }v=v_{1}\\
0\text{ or }1 & \text{otherwise.}%
\end{array}
\right.
\]
Then $\pi/q^{m/2}$ generates $S$ (see, for example, Milne 2001, A.9).
\end{example}

Let $\pi$ be a Weil $q$-number of weight $m>0$ with no real conjugates. We say
that $\pi$ is \emph{ordinary} if its characteristic polynomial%
\[
X^{2g}+a_{1}X^{2g-1}+\cdots+a_{g}X^{g}+\cdots+q^{gm}%
\]
has middle coefficient $a_{g}$ a $p$-adic unit. From the theory of Newton
polygons, this means that, for each $p$-adic prime $v$ of $\mathbb{Q}{}[\pi]$,
either $\ord_{v}(\pi)=0$ (hence $\ord_{v}(\iota\pi)=m\ord_{v}(q)$) or
$\ord_{v}(\iota\pi)=0$. An ordinary Weil $q$-number is an algebraic integer.

\begin{example}
\label{xx17}Let $\pi$ be an ordinary Weil $q$-number of weight $m>0$ and
degree $2g$, and let $L$ be the splitting field in $\mathbb{Q}^{\mathrm{al}}%
{}$ of the characteristic polynomial of $\pi$. Assume that, for each complex
conjugate pair $\pi^{\prime}$, $\iota\pi^{\prime}$ of conjugates of $\pi$ in
$L$, there exists an element of $\Gal(L/\mathbb{Q}{})$ fixing $\pi^{\prime}$
and $\iota\pi^{\prime}$ and acting as $\iota$ on the remaining conjugates of
$\pi$ in $L$. This condition holds, for example, if $\Gal(L/\mathbb{Q}{})$ is
the full group of permutations of the set of conjugates of $\pi$ preserving
the subsets $\{\pi^{\prime},\iota\pi^{\prime}\}$.

We claim that $\pi/q^{m/2}$ generates $S$. To prove this, we fix a $p$-adic
prime $v$ of $L$ and we normalize $\ord_{v}$ so that $\ord_{v}(q)=1$. We
number the conjugates of $\pi$ in $L$ so that $\ord_{v}(\pi_{i})=m$ and
$\ord_{v}(\iota\pi_{i})=0$ for $i=1,\ldots,g$. We may suppose that $g>1$ as
the case $g=1$ is trivial.

Suppose that there is a relation%
\begin{equation}
\pi_{1}^{n_{1}}\cdots\pi_{g}^{n_{g}}=q^{n},\quad n_{i}\text{, }n\in
\mathbb{Z}{}, \label{e0}%
\end{equation}
in $L$. Let $\alpha=\pi_{1}^{n_{1}}\cdots\pi_{g}^{n_{g}}$. Then $\alpha
\bar{\alpha}=q^{2n}$, and so%
\[
m(n_{1}+\cdots+n_{g})=2n\text{.}%
\]
Let $\sigma_{i}$ be the element of $\Gal(L/\mathbb{Q}{})$ fixing $\pi_{i}$ and
$\iota\pi_{i}$ and interchanging $\pi_{j}$ and $\iota\pi_{j}$ for $j\neq i$.
On applying $\sigma_{i}$ to (\ref{e0}) and then $\ord_{v}$, we find that%
\[
mn_{i}=n.
\]
As this is true for all $i$, we deduce that $n_{1}=\cdots=n_{g}=0$, i.e.,
(\ref{e0}) is the trivial relation. It follows from this that if%
\[
\pi_{1}^{n_{1}}\cdot\iota\pi_{1}^{n_{1}^{\prime}}\cdot\cdots\cdot\pi
_{g}^{n_{g}}\cdot\iota\pi_{g}^{n_{g}^{\prime}}=q^{n},
\]
then%
\[
n_{i}=n_{i}^{\prime}\text{, for all }i.
\]

Let $\rho_{i}$ be the homomorphism $\mathbb{Q}{}[\pi]\rightarrow L$ sending
$\pi$ to $\pi_{i}$, and let $\chi=\sum_{i=1}^{g}(n_{i}\rho_{i}+n_{i}^{\prime
}(\iota\circ\rho_{i}))$ be a cocharacter of $(\mathbb{G}_{m})_{\mathbb{Q}%
{}[\pi]/\mathbb{Q}{}}$. Then%
\[
\chi(\pi/q^{m/2})=\pi_{1}^{n_{1}}\cdot\iota\pi_{1}^{n_{1}^{\prime}}\cdot
\cdots\cdot\pi_{g}^{n_{g}}\cdot\iota\pi_{g}^{n_{g}^{\prime}}\cdot q^{-n}%
\]
with $n=(n_{1}+n_{1}^{\prime}+\cdots+n_{g}+n_{g}^{\prime})m/2$. Therefore,
\[
\chi(\pi/q^{m/2})=1\iff n_{i}=n_{i}^{\prime}\text{ for all }i\iff\chi
=\iota\circ\chi\iff\chi\text{ is trivial on }S.
\]
This shows that $\pi/q^{m/2}$ generates $S$.
\end{example}

\section{Proof of Theorem 0.1}

We begin with an easy lemma.

\begin{lemma}
\label{xx18}Let $a$ and $b$ be eigenvalues of endomorphisms $\alpha$ and
$\beta$ of vector spaces $V$ and $W$. If the eigenvalue $ab$ of $\alpha
\otimes\beta$ on $V\otimes W$ is semisimple and $b\neq0$, then $a$ is semisimple.
\end{lemma}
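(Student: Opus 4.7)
The plan is to prove the contrapositive: if $a$ is a non-semisimple eigenvalue of $\alpha$, then $ab$ is a non-semisimple eigenvalue of $\alpha\otimes\beta$. Concretely, I will build an explicit size-$2$ Jordan block for $\alpha\otimes\beta$ at $ab$ out of a size-$2$ Jordan block for $\alpha$ at $a$ together with any eigenvector of $\beta$ for $b$.

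After extending scalars to an algebraic closure (which does not affect whether an eigenvalue is semisimple), the non-semisimplicity of $a$ means $\Ker(\alpha-a)^{2}\supsetneq\Ker(\alpha-a)$, so one can pick $v_{2}\in V$ with $v_{1}:=(\alpha-a)v_{2}$ nonzero and $\alpha v_{1}=av_{1}$. Choose also an eigenvector $w\in W$ for $\beta$ with eigenvalue $b$; it is nonzero because $b$ is an eigenvalue. A one-line computation then gives
\[
(\alpha\otimes\beta-ab)(v_{2}\otimes w)=b\,v_{1}\otimes w,\qquad (\alpha\otimes\beta-ab)(v_{1}\otimes w)=0,
\]
so $(\alpha\otimes\beta-ab)^{2}(v_{2}\otimes w)=0$ while $(\alpha\otimes\beta-ab)(v_{2}\otimes w)\neq 0$ (here we use both $b\neq 0$ and $v_{1},w\neq 0$). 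Hence $v_{2}\otimes w$ witnesses a size-$2$ Jordan block for $\alpha\otimes\beta$ at $ab$, contradicting the assumed semisimplicity of $ab$.

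The argument is pure linear algebra with no real obstacle. The only delicate point is the role of the hypothesis $b\neq 0$: without it the witness $b\,v_{1}\otimes w$ vanishes and the Jordan block degenerates, consistent with the fact that $\alpha\otimes 0$ is the zero map and hence semisimple regardless of $\alpha$.
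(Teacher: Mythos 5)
Your proof is correct: the contrapositive via an explicit length-two Jordan string $v_{1}=(\alpha-a)v_{2}$, tensored with an eigenvector $w$ of $\beta$, does produce a vector in $\Ker(\alpha\otimes\beta-ab)^{2}\setminus\Ker(\alpha\otimes\beta-ab)$, and you correctly isolate where $b\neq0$ is needed. The paper offers no proof at all (it introduces the statement only as ``an easy lemma''), and your argument is the natural one it implicitly has in mind, so there is nothing to compare beyond noting that your write-up supplies the omitted details, including the harmless reduction to an algebraically closed base field.
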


\begin{example}
\label{xx19}Let $\alpha$ be an endomorphism of a $k$-vector space $V$. Let $q$
be an element of $k$, and suppose that the eigenvalues of $\alpha$ on $V$
occur in pairs $(a,a^{\prime})$ with $aa^{\prime}=q$. If the eigenvalue $q$ of
$\alpha\otimes\alpha$ on $V\otimes V$ is semisimple, then $\alpha$ acts
semisimply on $V$.
\end{example}

We fix a standard Weil cohomology $H$ on the varieties over $\mathbb{F}_{q}{}%
$, and we let $\Mot_{H}(\mathbb{F}{}_{q})$ denote the corresponding category
of motives. It is a pseudo-abelian rigid tensor category such that
$\End(\1)=\mathbb{Q}{}$ (Saavedra 1972, 4.1.3.5). By a motive, we shall mean
an object of $\Mot_{H}(\mathbb{F}_{q})$. We let $Q$ denote the field of
coefficients of $H$ and $\omega$ the tensor functor $\Mot_{H}(\mathbb{F}{}%
_{q})\rightarrow\Vc_{Q}$ defined by $H$.

The category $\Mot_{\mathrm{num}}(\mathbb{F}_{q}{})$ of numerical motives is a
semisimple tannakian category, and there is a (quotient) tensor functor
$q\colon\Mot_{H}(\mathbb{F}_{q}{})\rightarrow\Mot_{\mathrm{num}}%
(\mathbb{F}_{q}{})$. For a motive $X$, the kernel of $\End(X)\rightarrow
\End(qX)$ is a nilpotent ideal $N{}$ equal to the Jacobson radical of
$\End(X)$ (Jannsen 1992, Thm 1, Cor.\thinspace1). In particular, $\End(X)/N$
is separable over $\mathbb{Q}{}$, and so the $\mathbb{Q}{}$-bilinear pairing%
\begin{equation}
\End(X)\times\End(X)\rightarrow\mathbb{Q}{},\quad\alpha,\beta\mapsto
\Tr(\alpha\circ\beta) \label{e5}%
\end{equation}
has left and right kernels equal to $N$.

\begin{lemma}
\label{xx20} Let $M\subset\End(X)$ be a $\mathbb{Q}{}$-subspace such that
$M\cap N{}=0$. Then the map $Q\otimes M\rightarrow\End(\omega(X))$ is
injective, and its image is a direct summand of $\End(\omega(X))$ (as a
$Q$-vector space with a Frobenius operator).
\end{lemma}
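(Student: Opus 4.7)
The plan is to use the non-degeneracy of the trace pairing on $\End(X)/N$ to construct a dual basis for $M$ inside $\End(X)$, and then to exploit this dual basis both for the injectivity claim and for cutting out a Frobenius-stable complement via the trace form on $\End(\omega(X))$.

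First I would fix a $\mathbb{Q}{}$-basis $m_1,\dots,m_n$ of $M$. The hypothesis $M\cap N=0$ forces the images of the $m_i$ in $\End(X)/N$ to be $\mathbb{Q}{}$-linearly independent. Because (\ref{e5}) has left and right kernels equal to $N$, it descends to a non-degenerate pairing on $\End(X)/N$; hence there exist $\beta_1,\dots,\beta_n\in\End(X)$ with $\Tr(m_i\circ\beta_j)=\delta_{ij}$. Applying $\omega$ and using compatibility of trace with tensor functors, we obtain $\Tr(\omega(m_i)\circ\omega(\beta_j))=\delta_{ij}$ in $Q$. Injectivity follows formally: if $\sum_i c_i\,\omega(m_i)=0$ with $c_i\in Q$, pair both sides with $\omega(\beta_j)$ to obtain $c_j=0$.

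For the direct-summand statement, set $V:=Q\cdot\omega(M)$ and $W:=\sum_j Q\cdot\omega(\beta_j)$, both of dimension $n$ inside $\End(\omega(X))$. The trace pairing $V\times W\to Q$ has matrix $I_n$ in the chosen generators, hence is non-degenerate. Since the trace form on $\End(\omega(X))$ itself is non-degenerate, the orthogonal $W^{\perp}$ satisfies $\dim W^{\perp}=\dim\End(\omega(X))-n$, and $V\cap W^{\perp}=0$ by the non-degeneracy of $V\times W\to Q$; a dimension count yields $\End(\omega(X))=V\oplus W^{\perp}$.

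Finally I would verify Frobenius compatibility. The operator on $\End(\omega(X))$ induced by $\pi_X$ (conjugation by $\pi_X$, possibly semilinear when $l=p$) fixes every element of $\omega(\End(X))$, because morphisms of motives commute with the Frobenius; in particular both $V$ and $W$ are pointwise Frobenius-fixed. The trace form on $\End(\omega(X))$ is equivariant under conjugation by $\pi_X$ (up to the coefficient Frobenius $\sigma$ when $l=p$), and since $W$ is Frobenius-fixed, the relation $\Tr(\pi_X x\pi_X^{-1}\circ w)=\sigma(\Tr(x\circ w))$ for $w\in W$ shows that $W^{\perp}$ is Frobenius-stable. The only real obstacle is the existence of the dual basis $\{\beta_j\}$, which ultimately rests on Jannsen's theorem that $N$ is the Jacobson radical and the resulting semisimplicity (hence separability) of $\End(X)/N$; the rest is formal linear algebra.
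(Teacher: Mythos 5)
Your argument is correct and follows essentially the same route as the paper: the hypothesis $M\cap N=0$ together with the fact that the trace pairing (\ref{e5}) has kernel $N$ yields a dual subspace (your span of the $\beta_j$, the paper's $E$), and the orthogonal complement of its $Q$-span gives the Frobenius-stable complement. You have merely written out in full the injectivity, dimension-count, and Frobenius-equivariance details that the paper leaves implicit.
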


\begin{proof}
As $M\cap N=0$, there exists a $\mathbb{Q}{}$-subspace $E$ of $\End(X)$ such
that $M$ and $E$ are dual under the pairing (\ref{e5}). The orthogonal space
to $Q\cdot E$ is a complementary submodule to $Q\cdot M$ in $\End(\omega
_{H}(X))$.
\end{proof}

Let $X$ be a motive. The characteristic polynomial $P_{\alpha}(X,t)=\det
(1-\alpha t|X)$ of an endomorphism $\alpha$ of $X$ is monic with coefficients
in $\mathbb{Q}{}$, and its degree is equal to $\rank X$. Moreover, $P_{\alpha
}(X,t)$ is equal to the characteristic polynomial $\det(1-\omega(\alpha
)t\mid\omega(X))$ of $\omega(\alpha)$ acting on the $Q{}$-vector space
$\omega(X)$. See \S 1.

\begin{lemma}
\label{xx21}Let $X$ be a motive and $F$ a subfield of $\End(X)$. The $F\otimes
Q$-module $\omega(X)$ is free of rank $\frac{\rank(X)}{[F\colon\mathbb{Q}{}]}$ .
\end{lemma}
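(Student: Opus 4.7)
The plan is to reduce the freeness statement to a computation of the characteristic polynomial of a single generator $\alpha$ of $F/\mathbb{Q}{}$, exploiting the fact that this polynomial lies in $\mathbb{Q}{}[t]$ and hence is constrained by Galois invariance.

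First, choose an element $\alpha\in F$ with $\mathbb{Q}{}[\alpha]=F$, and let $f(t)\in\mathbb{Q}{}[t]$ be its minimal polynomial, of degree $d=[F\colon\mathbb{Q}{}]$. Since $f(\alpha)=0$ in $\End(X)$, applying $\omega$ gives $f(\omega(\alpha))=0$ in $\End(\omega(X))$, so every eigenvalue of $\omega(\alpha)$ is a root of $f$. Now use the identity recalled just above the statement: the characteristic polynomial $P_{\alpha}(X,t)$ of $\alpha$ on the motive $X$ lies in $\mathbb{Q}{}[t]$ and equals the characteristic polynomial of $\omega(\alpha)$ on $\omega(X)$. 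Its roots in $\overline{\mathbb{Q}{}}$ are roots of $f$, and because $f$ is irreducible over $\mathbb{Q}{}$, the multiplicities of these roots in $P_{\alpha}(X,t)\in\mathbb{Q}{}[t]$ must all coincide by Galois invariance. Hence $P_{\alpha}(X,t)=f(t)^{n}$ for some $n\geq 0$, and comparing degrees yields $n=\rank(X)/d$ (in particular, $d\mid\rank(X)$).

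Next, extend scalars to $Q$. Since $f$ is separable, it factors over $Q$ as $f=\prod_{i=1}^{r}f_{i}$ with distinct monic irreducibles $f_{i}\in Q[t]$, giving
\[
F\otimes_{\mathbb{Q}{}}Q=Q[t]/(f(t))=\prod_{i=1}^{r}K_{i},\qquad K_{i}=Q[t]/(f_{i}(t)),
\]
a product of fields. The orthogonal idempotents $e_{i}\in F\otimes Q$ act on $\omega(X)$ via $F\hookrightarrow\End(X)\xrightarrow{\omega}\End(\omega(X))$, and decompose $\omega(X)=\bigoplus_{i=1}^{r}V_{i}$ with $V_{i}=e_{i}\omega(X)$ a $K_{i}$-vector space of some dimension $n_{i}$.

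Finally, compute the characteristic polynomial of $\omega(\alpha)$ on each piece. On $V_{i}$, $\alpha$ acts through the quotient $K_{i}=Q[t]/(f_{i})$, so as a $Q$-linear endomorphism it is a direct sum of $n_{i}$ copies of the regular action of $t$ on $K_{i}$, whose characteristic polynomial is $f_{i}$. Thus the characteristic polynomial of $\omega(\alpha)$ on $V_{i}$ is $f_{i}(t)^{n_{i}}$, and taking the product gives
\[
\prod_{i=1}^{r}f_{i}(t)^{n_{i}}=P_{\alpha}(X,t)=f(t)^{n}=\prod_{i=1}^{r}f_{i}(t)^{n}.
\]
By unique factorization in $Q[t]$, $n_{i}=n$ for every $i$, so each $V_{i}$ has $K_{i}$-dimension $n$, which is precisely the condition for $\omega(X)=\bigoplus V_{i}$ to be free of rank $n=\rank(X)/[F\colon\mathbb{Q}{}]$ over $F\otimes_{\mathbb{Q}{}}Q=\prod K_{i}$.

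The only real obstacle is the step $P_{\alpha}(X,t)=f(t)^{n}$; once that Galois-invariance argument is in place, the decomposition by idempotents and the degree/multiplicity bookkeeping are routine.
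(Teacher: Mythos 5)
Your proof is correct and follows essentially the same route as the paper's: decompose $F\otimes_{\mathbb{Q}}Q=\prod_i K_i$, write $\omega(X)=\bigoplus_i V_i$, and compare the factorization of the rational characteristic polynomial $P_{\alpha}(X,t)=f(t)^{n}$ with the product $\prod_i f_i(t)^{n_i}$ to conclude $n_i=n$. The only cosmetic difference is that you obtain $P_{\alpha}(X,t)=f(t)^{n}$ directly from Galois invariance of the multiplicities, whereas the paper reads it off from matching the two factorizations over $Q$; both are sound.
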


\begin{proof}
If $F\otimes Q$ is again a field, then $\omega(X)$ is certainly free.
Otherwise it will decompose into a product of fields $F\otimes Q=\prod
\nolimits_{i}F_{i}$, and correspondingly $\omega(X)\simeq\bigoplus
\nolimits_{i}W_{i}$ with $W_{i}$ an $F_{i}$-vector space of dimension
$m_{i}\geq0$. Our task is to show that the $m_{i}$ are all equal (in fact, to
$\rank(X)/[F\colon\mathbb{Q}{}]$).

Let $\alpha$ be such that $F=\mathbb{Q}{}[\alpha]$, and let $P_{\alpha
}(F/\mathbb{Q}{},t)$ (resp.\ $P_{\alpha}(F_{i}/Q{},t)$) denote the
characteristic polynomial of $\alpha$ in the field extension $F/\mathbb{Q}{}$
(resp.\ $F_{i}/Q{}$). From the decomposition $F\otimes Q=\prod_{i}F_{i}$, we
find that%
\begin{equation}
P_{{}\alpha}(F/\mathbb{Q},t)=\prod\nolimits_{i}P_{\alpha}(F_{i}/Q{},t)
\label{e2}%
\end{equation}
in $Q[t]$. The polynomial $P_{{}\alpha}(F/\mathbb{Q},t)$ is irreducible in
$\mathbb{Q}{}[t]$, and (\ref{e2}) is its factorization into irreducible
polynomials in $Q[t]$.

From the isomorphism of $F\otimes Q$-modules $\omega(X)\simeq\bigoplus
\nolimits_{i}W_{i}$, we find that%
\[
P_{\alpha}(X,t)=\prod\nolimits_{i}P_{\alpha}(F_{i}/Q{},t)^{m_{i}}.
\]

The two equations show that every monic irreducible factor of $P_{\alpha
}(X,t)$ in $\mathbb{Q}{}[t]$ shares a root with $P_{\alpha}(F/\mathbb{Q}{}%
,t)$, and therefore equals it. Hence $P_{\alpha}(X,t)=P_{\alpha}%
(F/\mathbb{Q}{},t)^{m}$ for some integer $m$, and each $m_{i}=m$. On equating
the degrees, we find that $\rank X=m[L\colon\mathbb{Q}{}]$.
\end{proof}

\begin{plain}
\label{xx22}According to Wedderburn's Principal Theorem (\cite{albert1939},
III, Theorem 23), the homomorphism of $\mathbb{Q}{}$-algebras
$\End(X)\rightarrow\End(X)/N$ admits a section, unique up to conjugation.
Choose one, and let $\End(X)^{\prime}$ be its image; thus%
\[
\End(X)=N\oplus\End(X)^{\prime}\text{.}%
\]
Let $\pi$ denote the image of $\pi_{X}$ in $\End(X)^{\prime}$ --- it is
independent of the choice of the section because $\pi_{X}$ lies in the centre
of $\End(X)$. As $\End(X)^{\prime}$ is separable and $\pi$ is contained in its
centre, $\mathbb{Q}{}[\pi]$ is a product of fields. Moreover, as $\pi$ is the
image of $\pi_{X}$ under a homomorphism of $\mathbb{Q}{}$-algebras, it
satisfies $P_{\pi_{X}}(X,t)$. Thus we are in the situation of \ref{xx12} with
$m$ equal to the weight of $X$. Let $S$ be the algebraic group defined by
$\mathbb{Q}{}[\pi]$, as in \ref{xx09}. According to Lemma \ref{xx20}, the map
$Q\otimes\mathbb{Q}{}[\pi]\rightarrow\End(\omega(X))$ is injective.
\end{plain}

\begin{lemma}
\label{xx23}Let $X$ be a motive of weight $m$ in $\Mot_{\hom(l)}(\mathbb{F}%
{}_{q})$. The following conditions are equivalent:

\begin{enumerate}
\item $Q\cdot A_{l}(X^{\otimes2}(m))=\omega(X^{\otimes2})^{\pi/q^{m/2}};$

\item $T(X^{\otimes2}(m),l)+S(X^{\otimes2}(m),l);$

\item $T(X^{\otimes2}(m),l)$ and $\pi_{X}$ acts semisimply on $\omega(X)$.
\end{enumerate}

\noindent Each statement implies

\begin{enumerate}
\setcounter{enumi}{3}

\item $Q\cdot A_{l}(X^{\otimes2}(m))\supset\omega(X^{\otimes2})^{S}$,
\end{enumerate}

\noindent and is equivalent to it if $\pi/q^{m/2}$ generates $S$.
\end{lemma}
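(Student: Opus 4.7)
The plan is to organise the argument around the unconditional chain of inclusions
\[
Q\cdot A_{l}(X^{\otimes2}(m))\;\subset\;\omega(X^{\otimes2})^{\pi_{X}/q^{m/2}}\;\subset\;\omega(X^{\otimes2})^{\pi/q^{m/2}}.
\]
The first inclusion records Frobenius-invariance of algebraic cycle classes. The second rests on the claim that $\omega(\pi)$ is the Jordan semisimple part of $\omega(\pi_{X})$: $\omega(\pi)$ is semisimple because, by \ref{xx22}, $\mathbb{Q}[\pi]$ is a product of fields and so the minimal polynomial of $\pi$ is squarefree, while $\omega(\pi_{X})-\omega(\pi)=\omega(n)$ is nilpotent (since $n\in N$ and $N$ is a nilpotent ideal) and commutes with $\omega(\pi_{X})$ (since $\pi_{X}$ is central in $\End(X)$). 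Because the semisimple part of a tensor square is the tensor square of the semisimple part, the $q^{m}$-eigenspace of $\pi\otimes\pi$ equals the generalised $q^{m}$-eigenspace of $\pi_{X}\otimes\pi_{X}$, which contains the actual $q^{m}$-eigenspace. In these terms, $T(X^{\otimes2}(m),l)$ (rewritten via \ref{xx05}(c)) asserts equality in the first inclusion and $S(X^{\otimes2}(m),l)$ asserts equality in the second, while (a) asserts that both are equalities, giving (a)$\iff$(b) at once.

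For (b)$\iff$(c): semisimplicity of $\pi_{X}$ on $\omega(X)$ forces semisimplicity of $\pi_{X}\otimes\pi_{X}$ on $\omega(X^{\otimes2})$, yielding $S(X^{\otimes2}(m),l)$. Conversely, every eigenvalue $a$ of $\pi_{X}$ on $\omega(X)$ is a Weil $q$-number of weight $m$ (the characteristic polynomial lies in $\mathbb{Q}[t]$ by \S1.3), so its complex conjugate $q^{m}/a$ is also a nonzero eigenvalue; Lemma \ref{xx18} applied to the pair $(a,q^{m}/a)$ with semisimple product $q^{m}$ shows $a$ is semisimple, and hence so is $\pi_{X}$.

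For the final statement, $\omega(X^{\otimes2})^{S}\subset\omega(X^{\otimes2})^{\pi/q^{m/2}}$ is automatic because $\pi/q^{m/2}\in S(\mathbb{Q}^{\mathrm{al}})$, so (a) immediately yields (d). If moreover $\pi/q^{m/2}$ generates $S$, the reverse inclusion follows by the argument of Proposition \ref{xx13}: for any $v\in\omega(X^{\otimes2})^{\pi/q^{m/2}}$, the $\GL$-stabiliser of $v$ is an algebraic subgroup containing $\pi/q^{m/2}$ and hence containing $S$. Then (d) together with the always-valid chain sandwiches everything to equality, recovering (a). The main technical point is the Jordan identification $\omega(\pi)=\omega(\pi_{X})_{ss}$ in the first paragraph; once it is in hand, the remaining equivalences unwind formally.
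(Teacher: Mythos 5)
Your proposal is correct and follows essentially the same route as the paper: the same chain of inclusions $Q\cdot A_{l}(X^{\otimes2}(m))\subset\omega(X^{\otimes2})^{\pi_{X}/q^{m/2}}\subset\omega(X^{\otimes2})^{\pi/q^{m/2}}$ with $T$ and $S$ controlling the two equalities, Example \ref{xx19}/Lemma \ref{xx18} for (b)$\iff$(c), and the generation hypothesis to identify $\omega(X^{\otimes2})^{\pi/q^{m/2}}$ with $\omega(X^{\otimes2})^{S}$. The only difference is that you spell out the Jordan-decomposition identification $\omega(\pi)=\omega(\pi_{X})_{\mathrm{ss}}$, which the paper leaves implicit.
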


\begin{proof}
(a)$\iff$(b\.{)}. We have%
\[
Q\cdot A_{l}(X^{\otimes2}(m))\subset\omega(X^{\otimes2})^{\pi_{X}/q^{m/2}%
}\subset\omega(X^{\otimes2})^{\pi/q^{m/2}}\text{.}%
\]
The first inclusion is an equality if and only if $T(X^{\otimes2}(m),l)$
holds, and the second inclusion is an equality if and only if $1$ is a
semisimple eigenvalue of $\pi_{X}/q^{m/2}$ on $\omega(X)$.

(b)$\iff$(c\.{)}. From \ref{xx19} we see that $\pi_{X}$ acts semisimply on
$\omega(X)$ if and only if $q^{m}$ is a semisimple eigenvalue of $\pi
_{X}\otimes\pi_{X}$ on $\omega(X^{\otimes2})$..

\noindent For the final statement, note that $\omega(X^{\otimes2}%
)^{\pi/q^{m/2}}\supset\omega(X^{\otimes2})^{S}$ because $\pi/q^{m/2}\in
S(\mathbb{Q}{}^{\mathrm{al}})$, and equals it if $\pi/q^{m/2}$ generates $S$.
\end{proof}

\begin{theorem}
\label{xx24}Let $X$ be a motive of weight $m$ in $\Mot_{\mathrm{rat}%
}(\mathbb{F}{}_{q})$, and let $\pi$ and $S$ be as in \ref{xx22}.

\begin{enumerate}
\item The $Q$-algebra $(\bigotimes\omega(X))^{S}$ is generated by
$\omega(X^{\otimes2})^{S}$:%
\[
(\bigotimes\omega(X))^{S}=Q[\omega(X^{\otimes2})^{S}].
\]

\item If $\omega(X^{\otimes2})^{S}$ consists of algebraic classes and $S$ is
generated by $\pi_{X}/q^{m/2}$, then the Tate conjecture holds for $X^{\otimes
n}(\tfrac{mn}{2})$ for all $n\in\mathbb{N}{}$ with $mn$ even.
\end{enumerate}
\end{theorem}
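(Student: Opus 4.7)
Part (a) is an application of Proposition~\ref{xx09} to the algebra with involution $(\mathbb{Q}{}[\pi],{}^{\prime})$ furnished by~\ref{xx22}. That paragraph already exhibits $\mathbb{Q}{}[\pi]$ as a finite product of CM and totally real fields with its canonical positive involution, and identifies $S$ with the corresponding group of multiplicative type. What remains is to check that $\omega(X)$ is free as a $\mathbb{Q}{}[\pi]\otimes_{\mathbb{Q}{}}Q$-module of finite rank: I would decompose $X$ via the primitive idempotents of $\mathbb{Q}{}[\pi]\subset\End(X)^{\prime}$ (which exist inside $\End(X)$ thanks to the Wedderburn section) and apply Lemma~\ref{xx21} to each summand, after which Proposition~\ref{xx09} delivers $(\bigotimes\omega(X))^{S}=Q[\omega(X^{\otimes 2})^{S}]$.

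For (b), the hypothesis that $\pi_{X}/q^{m/2}$ generates $S$ combined with Proposition~\ref{xx13} (applied to $V=\omega(X)$) yields
\[
(\bigotimes\omega(X))^{\pi_{X}/q^{m/2}}=(\bigotimes\omega(X))^{S}=Q[\omega(X^{\otimes 2})^{S}].
\]
Reading off the degree-$n$ graded piece, the left-hand side is the Frobenius-fixed subspace of $\omega(X^{\otimes n}(mn/2))$, so the Tate conjecture for $X^{\otimes n}(mn/2)$ reduces to the inclusion $\omega(X^{\otimes n})^{S}\subseteq Q\cdot A_{l}(X^{\otimes n}(mn/2))$. I would handle this by the parity of $n$. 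For $n$ odd, Remark~\ref{xx10} says $\iota$ acts as $-1$ on $X^{\ast}(S)$; a weight of $S$ on $\omega(X^{\otimes n})$ is a sum $\sum_{i=1}^{n}\xi_{\sigma_{i}}$ of embedding characters satisfying $\xi_{\sigma}+\xi_{\iota\sigma}=0$, and such a sum can vanish in $X^{\ast}(S)$ only if the $\sigma_{i}$ pair off as $(\sigma,\iota\sigma)$ (or, in the totally real case, each $\sigma$ appears an even number of times), forcing $n$ to be even. Hence $\omega(X^{\otimes n})^{S}=0$ for odd $n$ and the Tate conjecture holds trivially (note that $mn$ even with $n$ odd forces $m$ to be even, so the twist is legitimate). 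For $n=2k$, part (a) expresses $\omega(X^{\otimes 2k})^{S}$ as the $Q$-span of $k$-fold tensor products of elements of $\omega(X^{\otimes 2})^{S}$; by hypothesis each factor lies in $Q\cdot A_{l}(X^{\otimes 2}(m))$, and since a tensor product of algebraic classes is algebraic, every such product lies in $Q\cdot A_{l}(X^{\otimes 2k}(km))$.

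I expect the main obstacle to lie in part (a), namely applying Proposition~\ref{xx09} cleanly when $\mathbb{Q}{}[\pi]$ decomposes into several field factors of possibly unequal degrees: Lemma~\ref{xx21} is stated for a single subfield, so the freeness of $\omega(X)$ as a $\mathbb{Q}{}[\pi]\otimes Q$-module has to be assembled summand by summand. A related subtlety concerns the distinction between the true Frobenius $\pi_{X}$ and its Wedderburn lift $\pi$, which differ by a nilpotent element of $\End(X)$: one should check that the fixed subspaces used in (b) are insensitive to this difference, which is essentially the semisimplicity statement encoded in Lemma~\ref{xx23} and should follow from the algebraicity hypothesis.
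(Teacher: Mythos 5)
Your proposal is correct and follows essentially the same route as the paper: part (a) via Lemma \ref{xx21} and Proposition \ref{xx09}/\ref{xx13} applied to $\mathbb{Q}[\pi]$ acting on $\omega(X)$, and part (b) by combining the generation hypothesis ($(\bigotimes\omega(X))^{\pi/q^{m/2}}=(\bigotimes\omega(X))^{S}$) with (a) and the multiplicativity of algebraic classes, then reading off graded pieces. The two points you flag as potential obstacles — freeness of $\omega(X)$ over the product of fields $\mathbb{Q}[\pi]\otimes Q$ (handled by primitive idempotents) and the harmless discrepancy between $\pi_X$ and its semisimple image $\pi$ (the $\pi_X$-fixed space is contained in the $\pi$-fixed space, which is the inclusion actually needed) — are real gaps in the paper's one-line proof that you have correctly filled.
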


\begin{proof}
(a) Lemma \ref{xx21} allows us to apply Proposition \ref{xx13} to the
$\mathbb{Q}{}$-algebra $\mathbb{Q}{}[\pi]$ acting on the $Q$-vector space
$V=\omega(X)$.

(b) From the hypotheses, we find that%
\[
\left(  \bigotimes\omega(X)\right)  ^{\pi/q^{m/2}}=\left(  \bigotimes
\omega(X)\right)  ^{S}\subset Q[A_{l}(X^{\otimes2}(m))]\subset\bigoplus
\nolimits_{n}Q\cdot A_{l}(X^{\otimes n}(m))
\]
which implies that, for all $n$ with $mn$ even,
\[
\omega(X^{\otimes n})^{\pi/q^{m/2}}\subset Q\cdot A_{l}(X^{\otimes n}%
(\tfrac{mn}{2})).
\]

\end{proof}

We now prove Theorem \ref{xx01}. When $\pi_{X}$ actx semisimply on $\omega
(X)$, statement (a) of Theorem \ref{xx24} becomes statement (a) of Theorem
\ref{xx01}. On the other hand, Lemma \ref{xx23} shows that statement (b) of
Theorem \ref{xx24} implies statement (b) of Theorem \ref{xx01}.

\section{Proof Theorem 0.2}

\begin{lemma}
\label{xx25}Let $S$ be a diagonalizable group acting on a $Q$-vector space
$V$, and let $G$ be the group of $Q$-linear automorphisms of $V$ commuting
with the action of $S$. For every character $\chi$ of $S$, the action of $G$
on $V_{\chi}$ is irreducible.
\end{lemma}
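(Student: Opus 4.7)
The plan is to reduce the irreducibility claim to the tautology that $\GL(V_\chi)$ acts irreducibly on $V_\chi$ by showing that the restriction map $G \to \GL(V_\chi)$ is surjective. Since $S$ is diagonalizable, $V$ decomposes as $V = \bigoplus_\chi V_\chi$ (over $Q$ if $S$ is split, and otherwise over $Q^{\mathrm{al}}$, to which one extends $G$ by base change without affecting irreducibility). On $V_\chi$, every $s \in S$ acts by the scalar $\chi(s)$.

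First I would verify that every $g \in G$ preserves each weight space: for $v \in V_\chi$ and $s \in S$, the identity $s(gv) = g(sv) = \chi(s)\,gv$ exhibits $gv$ as a $\chi$-eigenvector, so $gv \in V_\chi$. This gives a restriction homomorphism $G \to \GL(V_\chi)$.

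Next I would show this restriction is surjective. Given any $h \in \GL(V_\chi)$, define $\tilde h \in \GL(V)$ by $\tilde h|_{V_\chi} = h$ and $\tilde h|_{V_{\chi'}} = \id$ for $\chi' \neq \chi$. Because $S$ acts on each weight space by a scalar, and scalars commute with every linear map, $\tilde h$ commutes with the whole $S$-action and therefore lies in $G$. Hence $G$ maps onto $\GL(V_\chi)$, and the $G$-action on $V_\chi$ is the same as the $\GL(V_\chi)$-action, which is tautologically irreducible.

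There is essentially no obstacle: the argument is pure linear algebra extracted from the weight decomposition. The only interpretive subtlety is to view the construction of $\tilde h$ over $Q^{\mathrm{al}}$ in the non-split case, so that $\tilde h \in G(Q^{\mathrm{al}})$; since irreducibility of a representation is preserved and detected under faithfully flat base change, this suffices.
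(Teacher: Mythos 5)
Your proof is correct and is essentially the paper's argument: the paper simply asserts that $G=\prod_{\chi}\GL(V_{\chi})$ and calls the conclusion obvious, while you supply the justification of that identity (each $g\in G$ preserves the weight spaces, and extending any $h\in\GL(V_{\chi})$ by the identity on the other weight spaces shows the restriction $G\to\GL(V_{\chi})$ is surjective). Your remark about passing to $Q^{\mathrm{al}}$ is a harmless extra precaution; for a diagonalizable (i.e.\ split) $S$ the weight decomposition already holds over $Q$.
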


\begin{proof}
Because $S$ is diagonalizable,%
\[
V=\bigoplus_{\chi\in X^{\ast}(S)}V_{\chi},\quad G=\prod_{\chi\in X^{\ast}%
(S)}\GL(V_{\chi}),
\]
from which the statement is obvious.
\end{proof}

Let $H$ be a standard Weil cohomology theory, and let $Q$ be its field of
coefficients. By a motive, we shall mean an object of $\Mot_{H}(\mathbb{F}%
{}_{q})$. For a motive $Y$ of weight $2m$ and a field $Q^{\prime}$ containing
$Q$, we say that an element of $Q^{\prime}\otimes_{Q}\omega(Y)$ is algebraic
if it is in the image of the map%
\[
Q^{\prime}\otimes_{\mathbb{Q}{}}\gamma(Y(m))\rightarrow Q^{\prime}\otimes
_{Q}\omega(Y)
\]
(cf. \ref{xx05}(c)).

Let $X$ be a motive of weight $m$, and let $\pi$ and $S$ be as in \ref{xx22}.
Let $\iota_{\pi}$ denote the unique positive involution of $\mathbb{Q}{}[\pi
]$. Then $\iota_{\pi}$ acts on $X^{\ast}(S)$ as $-1$ (see \ref{xx10}).

\begin{lemma}
\label{xx26}Assume that the elements of $\omega(X^{\otimes2})^{S}$ are
algebraic, and let $n\in\mathbb{N}{}$. If a $Q$-linear map $\alpha\colon
\omega(X^{\otimes n})\rightarrow\omega(X^{\otimes n})$ commutes with the
action of $S$, then it maps algebraic classes to algebraic classes.
\end{lemma}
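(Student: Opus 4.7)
The strategy is to base-change to $Q^{\mathrm{al}}$ and reduce, via Lemma \ref{xx25}, to a dichotomy on algebraic classes in each $S$-weight space of $V_n := \omega(X^{\otimes n})$. Decomposing $V_n \otimes Q^{\mathrm{al}} = \bigoplus_{\chi} (V_n)_{\chi}$, Lemma \ref{xx25} identifies $\End(V_n \otimes Q^{\mathrm{al}})^{S}$ with $\prod_{\chi} \End((V_n)_{\chi})$, each factor acting irreducibly on its weight space. Hence, by faithfully flat descent, the lemma is equivalent to showing that $(A_n)_{\chi} := (A_n \otimes Q^{\mathrm{al}}) \cap (V_n)_{\chi}$ is either $0$ or all of $(V_n)_{\chi}$ for every $\chi$.

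For the case $\chi = 0$, I would combine the hypothesis with Theorem \ref{xx24}(a): the latter writes $(\bigotimes \omega(X))^{S} = Q[\omega(X^{\otimes 2})^{S}]$, so every $S$-invariant element of $V^{\otimes N}$ is a tensor-algebra combination of classes in $\omega(X^{\otimes 2})^{S}$. These are algebraic by hypothesis, and tensor products of algebraic classes are again algebraic; thus $(V^{\otimes N})^{S}$ consists of algebraic classes for all $N$. Taking $N = n$ yields $(A_n)_{0} = (V_n)_{0}$, settling this case.

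For $\chi \neq 0$, I first refine the decomposition of $A_n$: since $A_n$ is stable under the action of $\End(X)^{\otimes n}$ (algebraic correspondences preserve cycle classes), it is in particular stable under $\mathbb{Q}[\pi]^{\otimes n}$, so $A_n \otimes Q^{\mathrm{al}}$ decomposes into multi-weight components $A_n(\sigma_1,\ldots,\sigma_n) \subseteq V_{\sigma_1} \otimes \cdots \otimes V_{\sigma_n}$ indexed by tuples of embeddings $\sigma_i \colon \mathbb{Q}[\pi] \hookrightarrow Q^{\mathrm{al}}$. The remaining step, and the main obstacle, is to prove that if a single such component with character sum $\chi$ is nonzero then every component with the same character sum is full. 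I would attempt this by combining (i) the motivic $\mathfrak{S}_n$-action permuting tensor factors of $X^{\otimes n}$, which preserves $A_n$ and relates multi-weights differing by a permutation, with (ii) the algebraic classes produced by the previous step inside $V^{\otimes 2n}$: for any $w \in (V_n)_{\chi}$ and any $v \in (V_n)_{-\chi}$, the product $w \otimes v$ lies in $(V^{\otimes 2n})^{S}$ and is therefore algebraic. The aim is then to pair $w$ against a nonzero starting algebraic class of weight $\chi$ to extract algebraicity of $w$ alone. Carrying out this extraction rigorously, without recourse to the Lefschetz standard conjecture that the author explicitly avoids (see Aside \ref{xx03}), is the subtle technical point.
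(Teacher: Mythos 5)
Your reduction and your treatment of the trivial character are sound as far as they go: the $\chi=0$ case is exactly the first sentence of the paper's argument ($(\bigotimes\omega(X))^{S}=Q[\omega(X^{\otimes2})^{S}]$ by Proposition \ref{xx13}, so all $S$-invariants in all tensor powers are algebraic). But the proposal stops where the real content begins, and you say so yourself. The dichotomy you reduce to for $\chi\neq0$ --- each $(A_n)_{\chi}$ is $0$ or all of $(V_n)_{\chi}$ --- is not an intermediate step toward Lemma \ref{xx26}; it is the \emph{consequence} that the paper later extracts from Lemma \ref{xx26} combined with Lemma \ref{xx25} in the proof of Theorem \ref{xx27}. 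You have inverted the logical order and are trying to prove the harder downstream statement first. Neither of your proposed tools closes it: the $\mathfrak{S}_{n}$-action only relates multi-weight components in the same orbit, and knowing that $w\otimes v$ is algebraic for $w\in(V_n)_{\chi}$, $v\in(V_n)_{-\chi}$ (because it is $S$-invariant) does not let you ``extract'' algebraicity of $w$: the space of algebraic classes in $\omega(X^{\otimes 2n})$ is in general strictly larger than the tensor product of the algebraic subspaces of the factors, so a tensor can be algebraic without its factors being so --- this is precisely the familiar phenomenon of transcendental classes pairing off into an algebraic class (e.g.\ the transcendental part of a diagonal).

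The idea you are missing is to apply the $\chi=0$ observation not to classes in $\omega(X^{\otimes n})$ but to $\alpha$ itself. The graph of $\alpha$ is a tensor in $\omega(X^{\otimes n})^{\vee}\otimes\omega(X^{\otimes n})$, and the hypothesis that $\alpha$ commutes with $S$ says exactly that this tensor is $S$-invariant. Since the $S$-invariants of the tensor algebra are polynomial in $\omega(X^{\otimes2})^{S}$ and hence algebraic by hypothesis, the graph of $\alpha$ is the class of an algebraic correspondence, and an algebraic correspondence carries algebraic classes to algebraic classes. That is the paper's entire proof: no weight-space analysis of $A_n$ is needed, and the dichotomy you were aiming at then falls out afterwards, in Theorem \ref{xx27}, from the irreducibility statement of Lemma \ref{xx25}.
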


\begin{proof}
As $(\bigotimes\omega(X))^{S}$ is generated as a $Q$-algebra by $\omega
(X^{\otimes2})^{S}$ (see \ref{xx13}), it consists of algebraic classes. It
follows that the graph of the map $\alpha$ is algebraic, and so it maps
algebraic classes to algebraic classes.
\end{proof}

Let $Y=X^{\vee}$, and let $\bar{Q}$ be a finite Galois extension of $Q$
splitting $\mathbb{Q}{}[\pi]$ (i.e., such that $Q^{\prime}\otimes
_{\mathbb{Q}{}}\mathbb{Q}{}[\pi]$ is a product of copies of $Q^{\prime}$).
From the pairing $\ev\colon Y\otimes X\rightarrow\1$, we get a pairing%
\[
Y^{\otimes n}\times X^{\otimes n}\rightarrow\1,
\]
which in turn gives us a nondegenerate pairing%
\begin{equation}
\omega(Y^{\otimes n})\times\omega(X^{\otimes n})\rightarrow Q. \label{eq4}%
\end{equation}
This is $S$-equivariant, and so its restriction to%
\[
\omega(Y^{\otimes n})_{\chi_{1}}\times\omega(X^{\otimes n})_{\chi_{2}%
}\rightarrow\bar{Q}%
\]
is nondegenerate if $\chi_{1}+\chi_{2}=0$ and zero otherwise.

\begin{theorem}
\label{xx27} Let $\bar{Q}$ be a finite Galois extension of $Q$ splitting
$\mathbb{Q}{}[\pi]$. Assume

\begin{enumerate}
\item there exists an involution $\iota_{Q}$ of $\bar{Q}/Q$ such that
$\rho\circ\iota_{\pi}=\iota_{Q}\circ\rho$ for all homomorphisms $\rho
\colon\mathbb{Q}{}[\pi]\rightarrow\bar{Q}$, and

\item $\omega(X^{\otimes2})^{S}$ consists of algebraic classes.
\end{enumerate}

\noindent Then, for all $n$ with $mn$ even, the pairing%
\begin{equation}
\gamma(Y^{\otimes n}(-\tfrac{mn}{2})\times\gamma(X^{\otimes n}(\tfrac{mn}%
{2}))\rightarrow\mathbb{Q}{} \label{eq5}%
\end{equation}
induced by $Y^{\otimes n}\times X^{\otimes n}\rightarrow\1$ is nondegenerate.
\end{theorem}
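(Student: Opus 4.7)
The plan is to extend scalars from $Q$ to a Galois splitting field $\bar{Q}$ and use the $S$-weight decomposition to reduce nondegeneracy of the pairing to a matching of weight-supports on the $X$- and $Y$-sides. First, because the pairing $\omega(Y^{\otimes n}) \times \omega(X^{\otimes n}) \to Q$ is $S$-equivariant and perfect, after base change to $\bar{Q}$ it splits as a direct sum of perfect pairings $\omega(Y^{\otimes n})_{-\chi} \times \omega(X^{\otimes n})_{\chi} \to \bar{Q}$ indexed by $\chi \in X^{\ast}(S_{\bar{Q}})$, with all cross pairings vanishing. Set $A_{X} := \bar{Q} \cdot A_{l}(X^{\otimes n}(mn/2)) \subseteq \omega(X^{\otimes n})_{\bar{Q}}$ and define $A_{Y}$ analogously.

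Second, I claim $A_{X}$ is a sum of full $S$-weight spaces. By hypothesis (b) combined with Theorem \ref{xx24}(a), $(\bigotimes \omega(X))^{S} = Q[\omega(X^{\otimes 2})^{S}]$ consists of algebraic classes, and Lemma \ref{xx26} (applied after base change to $\bar{Q}$) then shows that every $S$-equivariant $\bar{Q}$-linear endomorphism of $\omega(X^{\otimes n})_{\bar{Q}}$ preserves $A_{X}$. By Lemma \ref{xx25}, the commutant of $S$ in $\End_{\bar{Q}}(\omega(X^{\otimes n})_{\bar{Q}})$ acts irreducibly on each weight piece, so $A_{X}$ is forced to decompose as $\bigoplus_{\chi \in T_{X}} \omega(X^{\otimes n})_{\chi}$ for some $T_{X} \subseteq X^{\ast}(S_{\bar{Q}})$; the same argument gives $A_{Y} = \bigoplus_{\chi \in T_{Y}} \omega(Y^{\otimes n})_{\chi}$.

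Given these weight-space structures, nondegeneracy of the pairing on $A_{Y} \times A_{X}$ reduces to the combinatorial statement $T_{Y} = -T_{X}$. Hypothesis (a) enters here: $\iota_{\pi}$ acts on $X^{\ast}(S)$ as $-1$ by Remark \ref{xx10}, and hypothesis (a) forces the Galois involution $\iota_{Q}$ to act on $X^{\ast}(S_{\bar{Q}})$ by negation, carrying $\omega(X^{\otimes n})_{\chi}$ to $\omega(X^{\otimes n})_{-\chi}$ and similarly on the $Y$-side. Since $A_{X}$ and $A_{Y}$ are Galois-stable (being $\bar{Q}$-spans of $\mathbb{Q}$-spaces), $T_{X}$ and $T_{Y}$ are each closed under negation. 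The matching $T_{Y} = -T_{X}$ is then extracted by invoking the algebraicity of the coevaluation $\1 \to Y^{\otimes n} \otimes X^{\otimes n}$: projecting its image under $\omega$ onto the $(-\chi, \chi)$-block through the algebraic weight-space projectors sitting in $\mathbb{Q}[\pi] \otimes \bar{Q}$ yields an algebraic tensor in $\omega(Y^{\otimes n})_{-\chi} \otimes \omega(X^{\otimes n})_{\chi}$, so that $\omega(X^{\otimes n})_{\chi} \subseteq A_{X}$ forces $\omega(Y^{\otimes n})_{-\chi} \subseteq A_{Y}$, and symmetrically.

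The hard part will be this last transfer, turning \textquotedblleft$\chi \in T_{X}$\textquotedblright\ into \textquotedblleft$-\chi \in T_{Y}$\textquotedblright\ via the duality between $X$ and $Y$. Both hypotheses are essential: (b) supplies, through Theorem \ref{xx24}(a) and Lemma \ref{xx26}, the rigid block structure of $A_{X}$ and $A_{Y}$, while (a) lets $\iota_{Q}$ realize $\iota_{\pi}$ so that the $\mathbb{Q}$-rational algebraic classes and the $\bar{Q}$-rational weight decomposition mesh correctly across the evaluation pairing.
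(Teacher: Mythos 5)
Your setup and first two reductions track the paper's own proof exactly: base change to $\bar{Q}$, the block decomposition of the perfect pairing into $\omega(Y^{\otimes n})_{-\chi}\times\omega(X^{\otimes n})_{\chi}$, the identification of $A_{X}$ (and $A_{Y}$) as sums of full $S$-weight spaces via Lemmas \ref{xx25} and \ref{xx26}, and the use of hypothesis (a) together with Remark \ref{xx10} and Galois stability to show that $T_{X}$ is closed under negation. All of that is correct and is what the paper does.

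The gap is precisely the step you flag as the hard part: passing from $\chi\in T_{X}$ to $-\chi\in T_{Y}$. Your proposed mechanism --- project the algebraic coevaluation class onto the $(-\chi,\chi)$-block and conclude that $\omega(X^{\otimes n})_{\chi}\subseteq A_{X}$ forces $\omega(Y^{\otimes n})_{-\chi}\subseteq A_{Y}$ --- does not close. An algebraic class of $Y^{\otimes n}\otimes X^{\otimes n}$ lying in $W_{-\chi}\otimes V_{\chi}$ is not a sum of (algebraic)$\,\otimes\,$(algebraic); to extract its $W_{-\chi}$-components you must contract the $V_{\chi}$-factor against algebraic functionals, i.e.\ against algebraic classes of $Y^{\otimes n}(-\tfrac{mn}{2})$ landing in $W_{-\chi}$ --- which is exactly what you are trying to produce, so the step is circular. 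A symptom of the problem: your argument never genuinely uses the hypothesis $V_{\chi}\subseteq A_{X}$, since the projected coevaluation is algebraic over $\bar{Q}$ (the projectors are $\bar{Q}$-combinations of tensor products of the idempotents of $\mathbb{Q}[\pi]\otimes\bar{Q}$) whether or not $V_{\chi}$ consists of algebraic classes; if the inference were valid it would therefore show that \emph{every} weight space of $W$ is algebraic, i.e.\ the full Tate conjecture for $Y^{\otimes n}$ with no hypotheses on $T_{X}$ --- far too strong. The paper closes this step with the hard Lefschetz theorem: an algebraic, Frobenius- and $S$-compatible isomorphism between $\omega(X^{\otimes n})$ and a Tate twist of $\omega(Y^{\otimes n})$ carries the algebraic part of $V_{-\chi}$ onto the algebraic part of $W_{-\chi}$, so $V_{-\chi}$ algebraic implies $W_{-\chi}$ algebraic, and combined with your (correct) observation that $-\chi\in T_{X}$ whenever $\chi\in T_{X}$ this yields $-T_{X}\subseteq T_{Y}$ and, symmetrically, the nondegeneracy. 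Some input of this kind, identifying $X$ with a twist of $X^{\vee}$ by an algebraic correspondence, appears unavoidable; substitute it for your coevaluation step and the rest of your argument stands.
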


\begin{proof}
Fix an $n$, and let $V=\bar{Q}\otimes_{Q}\omega(X^{\otimes n})$ and $W=\bar
{Q}\otimes\omega(Y^{\otimes n})$. Let $G$ be the group of $\bar{Q}$-linear
automorphisms of $V$ commuting with the action of $S$. According to Lemma
\ref{xx26}, the action of $G$ on $\omega(X^{\otimes n})$ preserves the
algebraic classes. Let $\chi$ be a character of $S$ over $\bar{Q}$. As
$V_{\chi}$ is a simple $G$-module (\ref{xx25}), either $V_{\chi}\cap\bar
{Q}\cdot A_{H}(X^{\otimes n}(\tfrac{mn}{2}))$ is the whole of $V_{\chi}$ or it
is zero. If $V\chi$ consists of algebraic classes, so also does $\iota
_{Q}V_{\chi}$ because $\bar{Q}\cdot A_{H}(X^{\otimes n}(\frac{mn}{2}))$ is
stable under the action of $\Gal(\bar{Q}/Q)$. Now%
\[
\iota_{Q}V_{\chi}=V_{\iota_{Q}\circ\chi}=V_{\chi\circ\iota_{\pi}%
}\overset{(\text{\ref{xx10}})}{=}V_{-\chi}\text{.}%
\]
The hard Lefschetz theorem shows that if $V_{-\chi}$ consists of algebraic
classes, then so also does $W_{-\chi}$. It follows that the pairing
\[
\bar{Q}\cdot A_{H}(Y^{\otimes n}(-\tfrac{mn}{2}))\times\bar{Q}\cdot
A_{H}(X^{\otimes n}(\tfrac{mn}{2}))\rightarrow\bar{Q}%
\]
induced by the pairing $W\times V\rightarrow\bar{Q}$ is nondegenerate, and
this implies that the pairing%
\[
A_{H}(Y^{\otimes n}(-\tfrac{mn}{2}))\times A_{H}(X^{\otimes n}(\tfrac{mn}%
{2}))\rightarrow\mathbb{Q}{}%
\]
is nondegenerate. This is isomorphic to the required pairing (\ref{eq5}).
\end{proof}

\begin{corollary}
\label{xx28}Under the hypotheses of the theorem, the functor%
\[
\langle X\rangle^{\otimes}\rightarrow\langle X_{\mathrm{num}}\rangle^{\otimes}%
\]
is an equivalence of categories.
\end{corollary}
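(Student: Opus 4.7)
The plan is to verify the two conditions for an equivalence of pseudo-abelian rigid tensor categories: full faithfulness and essential surjectivity.

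For full faithfulness, I would use that in a rigid tensor category $\Hom(A,B) \simeq \gamma(A^{\vee}\otimes B)$, together with closure of $\langle X\rangle^{\otimes}$ under duals, Tate twists, and tensor products, to reduce to showing that $\gamma(M) \to \gamma(M_{\mathrm{num}})$ is an isomorphism for every $M \in \langle X\rangle^{\otimes}$. Since every such $M$ is a direct summand of a finite sum $\bigoplus X^{\otimes n_i}(r_i)$, and both $\gamma$ and the quotient to $\Mot_{\mathrm{num}}$ respect direct sums and idempotents, it suffices to treat the generators $M = X^{\otimes n}(r)$. If the weight $mn - 2r$ is nonzero, both sides vanish as in \ref{xx05}(b). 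Otherwise $mn = 2r$ (so $mn$ is even); the map $\gamma(M) \to \gamma(M_{\mathrm{num}})$ is surjective because homological equivalence refines numerical equivalence, and its kernel consists of homological classes pairing to zero with every class on the dual side. This kernel is zero by the nondegeneracy of
\[
\gamma(Y^{\otimes n}(-\tfrac{mn}{2})) \times \gamma(X^{\otimes n}(\tfrac{mn}{2})) \to \mathbb{Q}
\]
supplied by Theorem \ref{xx27} (recall $Y = X^{\vee}$).

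For essential surjectivity, every object of $\langle X_{\mathrm{num}}\rangle^{\otimes}$ has the form $e \cdot N_{\mathrm{num}}$ with $N = \bigoplus X^{\otimes n_i}(r_i) \in \langle X\rangle^{\otimes}$ and $e$ an idempotent in $\End(N_{\mathrm{num}})$. Jannsen's theorem makes the kernel of $\End(N) \to \End(N_{\mathrm{num}})$ a nilpotent ideal, so $e$ lifts to an idempotent $\tilde e \in \End(N)$, and $\tilde e N$ is an object of $\langle X\rangle^{\otimes}$ mapping to an object isomorphic to $e \cdot N_{\mathrm{num}}$. I do not anticipate a serious obstacle: the substantive content, nondegeneracy of the homological intersection pairing, is already packaged into Theorem \ref{xx27}, and the remaining work is the standard pseudo-abelian formalism --- reduction to generators via additivity of $\gamma$, and lifting idempotents along a nilpotent quotient.
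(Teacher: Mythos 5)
Your argument is correct and is precisely the routine verification the paper compresses into ``Immediate consequence of the theorem'': nondegeneracy of the pairing from Theorem \ref{xx27} gives conjecture $D$ for the generators $X^{\otimes n}(\tfrac{mn}{2})$, whence full faithfulness via rigidity and additivity of $\gamma$ (with the weight argument of \ref{xx05}(b) disposing of nonzero weights), and essential surjectivity follows by lifting idempotents along Jannsen's nilpotent ideal. The one gloss --- that every object of $\langle X\rangle^{\otimes}$ is a direct summand of a sum of $X^{\otimes n_i}(r_i)$, which quietly sets aside duals and mixed factors $X^{\otimes a}\otimes(X^{\vee})^{\otimes b}$ --- is exactly the reduction the paper itself asserts without proof in the last sentence of Remark \ref{xx07}, so your write-up is no less rigorous than the source.
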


\begin{proof}
Immediate consequence of the theorem.
\end{proof}

\begin{corollary}
\label{xx29}Under the hypotheses of the theorem, the maps%
\[
Q\otimes A_{H}(X^{\otimes n}(\tfrac{mn}{2}))\rightarrow\omega(X^{\otimes n})
\]
are injective.
\end{corollary}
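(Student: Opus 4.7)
My plan is to deduce injectivity as a direct linear-algebra consequence of the nondegeneracy of the pairing \eqref{eq5} already established in Theorem~\ref{xx27}, so no new input is needed. Writing $Y=X^{\vee}$, the first step is to observe that the evaluation $\mathrm{ev}\colon Y^{\otimes n}\otimes X^{\otimes n}\to\1$, after the appropriate Tate twist, produces the $Q$-bilinear cohomological pairing
\[
\omega(Y^{\otimes n})\times\omega(X^{\otimes n})\longrightarrow Q,
\]
and that its restriction to the $\mathbb{Q}$-subspaces $A_{H}(Y^{\otimes n}(-\tfrac{mn}{2}))\times A_{H}(X^{\otimes n}(\tfrac{mn}{2}))$ is, by functoriality of $\omega$ applied to $\mathrm{ev}$, precisely the $\mathbb{Q}$-valued pairing \eqref{eq5}. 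This identification is the only place where one has to be careful, and it is just a naturality check together with the fact that cohomological intersection numbers of algebraic cycles lie in $\mathbb{Q}$.

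Next I would invoke Theorem~\ref{xx27}, which asserts that this restricted pairing is nondegenerate as a pairing of finite-dimensional $\mathbb{Q}$-vector spaces. Because $\mathbb{Q}\hookrightarrow Q$ is flat and the two $\mathbb{Q}$-spaces are finite-dimensional, nondegeneracy is preserved under base change, and the induced $Q$-bilinear pairing
\[
\bigl(Q\otimes A_{H}(Y^{\otimes n}(-\tfrac{mn}{2}))\bigr)\times\bigl(Q\otimes A_{H}(X^{\otimes n}(\tfrac{mn}{2}))\bigr)\longrightarrow Q
\]
is again nondegenerate on the right.

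Finally, this extended pairing factors through the two maps $Q\otimes A_{H}(-)\to\omega(-)$ followed by the cohomological pairing $\omega(Y^{\otimes n})\times\omega(X^{\otimes n})\to Q$. Consequently, if some $\xi\in Q\otimes A_{H}(X^{\otimes n}(\tfrac{mn}{2}))$ maps to zero in $\omega(X^{\otimes n})$, then it must pair trivially with every element of $Q\otimes A_{H}(Y^{\otimes n}(-\tfrac{mn}{2}))$; nondegeneracy of the extended pairing then forces $\xi=0$, which is exactly the desired injectivity. I do not anticipate any real obstacle: the whole statement is a formal consequence of Theorem~\ref{xx27} plus the compatibility of the algebraic pairing with the cohomological pairing, and the one place needing a moment of care is precisely that compatibility.
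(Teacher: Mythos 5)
Your argument is correct and is essentially the paper's own proof: the paper simply makes the base-change step concrete by choosing a $\mathbb{Q}$-basis $e_1,e_2,\ldots$ of $A_{H}(X^{\otimes n}(\tfrac{mn}{2}))$ and, via nondegeneracy of (\ref{eq5}), dual elements $f_j$ with $\langle e_i,f_j\rangle=\delta_{ij}$, then pairs a vanishing combination $\sum a_i\otimes e_i$ against each $f_j$ to conclude $a_j=0$. The compatibility you flag (the $\mathbb{Q}$-valued pairing on algebraic classes being the restriction of the $Q$-valued cohomological pairing) is exactly the content of diagram (\ref{e13}), so no gap remains.
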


\begin{proof}
Let $e_{1},e_{2},\ldots$ be $\mathbb{Q}{}$-linearly independent elements of
$A_{H}(X(\tfrac{m}{2})^{\otimes n})$. According to the theorem, there exist
elements $f_{1},f_{2},\ldots$ of $A_{H}(Y^{\otimes n}(mn/2))$ such that
$\langle e_{i},f_{j}\rangle=\delta_{ij}$. Suppose that $\sum a_{i}\otimes
e_{i}$, $a_{i}\in Q$, maps to zero in $\omega(X^{\otimes n})$. Then
$0=\langle\sum a_{i}e_{i},f_{j}\rangle=a_{j}$.
\end{proof}

\begin{remark}
\label{xx30}Let $X$ be a motive over $\mathbb{F}{}_{q}$ for $l$-adic
homological equivalence.

\begin{enumerate}
\item Let $F$ be the Galois closure of the conjugates of the centre of
$\End(X_{\mathrm{num}})$ in $\mathbb{Q}{}^{\mathrm{al}}$. If complex
conjugation in $\Gal(F/\mathbb{Q}{})$ lies in the decomposition group of
$(l)$, then hypothesis (a) of the theorem holds.

\item If the Tate conjecture holds for $X^{\otimes2}(m)$ and $\pi_{X}$ acts
semisimply on $\omega(X)$, then hypothesis (b) of the theorem holds (see Lemma
\ref{xx23}).
\end{enumerate}
\end{remark}

We now prove Theorem \ref{xx02}. Let $X$ be a motive of weight $m$ in
$\Mot_{\mathrm{rat}}(\mathbb{F}{}_{q})$, and assume the full Tate conjecture
for $X^{\otimes2}(m)$. According to Lemma \ref{xx23}, this implies that
hypotheses (b) of Theorem \ref{xx27} holds for $X_{l}$ for all prime numbers
$l$. Thus, it remains to show that hypothesis (a) holds for an infinite set of
primes, but the Chebotarev density theorem (even the Frobenius density
theorem) implies this (see \ref{xx30}a).

\begin{aside}
\label{xx31}It would be interesting to see whether Deligne's proof of Clozel's
theorem (Deligne 2009) can be transferred to motives. It may yield a theorem
with different hypotheses.
\end{aside}

\section{Examples}

By a motive in this section, we mean an object of $\Mot_{\mathrm{rat}%
}(\mathbb{F}{}_{q})$.

\subsection{Motives with semisimple Frobenius}

We say that a motive $X$ has semisimple Frobenius if the action of $\pi_{X}$
on $\omega(X)$ is semisimple. For example, the motives of curves, abelian
varieties, and $K3$ surfaces have semisimple Frobenius (Weil, Deligne,
Piatetski-Shapiro and Shafarevich). The property is preserved by passage to
direct sums, direct summands, tensor products, and duals, and so the motives
with semisimple Frobenius form a pseudo-abelian rigid tensor subcategory of
$\Mot_{\mathrm{rat}}(\mathbb{F}{}_{q})$. For such a motive $X$, $\mathbb{Q}%
{}[\pi_{X}]$ satisfies the conditions in \ref{xx12}, and we let $S$ denote the
associated algebraic group. We say that $\pi_{X}$ is \emph{regular} if $S$ is
generated by $\pi_{X}/q^{m/2}$.

\begin{theorem}
\label{xx32}Let $X$ be motive of weight $m$ over $\mathbb{F}{}_{q}$ with
semisimple Frobenius.

\begin{enumerate}
\item The $\mathbb{Q}{}$-algebra\textrm{ }$(\bigotimes\omega(X))^{S}$ is
generated by $\omega(X^{\otimes2})^{S}$.

\item If $X^{\otimes2}(m)$ satisfies the Tate conjecture and $\pi_{X}$ is
regular, then the full Tate conjecture holds for all motives in $\langle
X\rangle^{\otimes}$.

\item If $X^{\otimes2}(m)$ satisfies the Tate conjecture, then, for infinitely
many prime numbers $l$, the functor $\langle X_{l}\rangle^{\otimes}%
\rightarrow\langle X_{\mathrm{num}}\rangle^{\otimes}$ is faithful.
\end{enumerate}
\end{theorem}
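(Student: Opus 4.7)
The plan is to deduce all three parts directly from Theorems \ref{xx24} and \ref{xx02}, using the semisimple Frobenius hypothesis to align the setup. Under that hypothesis $\pi_X$ acts semisimply on $\omega(X)$, so $\mathbb{Q}[\pi_X]\subset\End(\omega(X))$ is a product of fields; since the map $Q\otimes\mathbb{Q}[\pi]\to\End(\omega(X))$ of \ref{xx22} is injective, I would identify $\mathbb{Q}[\pi]$ with $\mathbb{Q}[\pi_X]$, so that the algebraic group $S$ appearing in the present theorem coincides with the one used in Sections 3--4.

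Part (a) is then immediate from Theorem \ref{xx24}(a). For part (b), I would verify the two hypotheses of Theorem \ref{xx24}(b). Lemma \ref{xx23} says that $T(X^{\otimes 2}(m),l)$ together with the semisimple Frobenius hypothesis forces $\omega(X^{\otimes 2})^{S}$ to consist of algebraic classes, while regularity of $\pi_X$ is by definition the statement that $S$ is generated by $\pi_X/q^{m/2}$. Theorem \ref{xx24}(b) then produces $T(X^{\otimes n}(mn/2),l)$ for every $n$ with $mn$ even -- precisely the weight-$0$ Tate twists of the tensor powers of $X$ -- and the last paragraph of Remark \ref{xx07} extends the Tate conjecture to every object of $\langle X\rangle^{\otimes}$. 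Since semisimplicity of the Frobenius is preserved by sums, summands, duals, and tensor products, $S(Y,l)$ holds for every $Y\in\langle X\rangle^{\otimes}$; and $T(Y^{\vee},l)$ follows from $T(Y,l)$ by the duality stability in Remark \ref{xx07}. Theorem \ref{xx06}(c) then upgrades $T+T^{\vee}+S$ to the full Tate conjecture on every weight-$0$ motive of $\langle X\rangle^{\otimes}$.

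For part (c), I would first observe that semisimple Frobenius on $X$ automatically yields $S(X^{\otimes 2}(m),l)$, and that $T(X^{\otimes 2}(m),l)$ passes to $T((X^{\otimes 2}(m))^{\vee},l)$ by duality (Remark \ref{xx07}); hence Theorem \ref{xx06}(c) already gives the full Tate conjecture for $X^{\otimes 2}(m)$. Theorem \ref{xx02} then supplies an infinite set of primes $l$ for which $\langle X_l\rangle^{\otimes}\to\langle X_{\mathrm{num}}\rangle^{\otimes}$ is fully faithful, whence faithful.

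The main obstacle is the bookkeeping in part (b): one has to verify carefully that Theorem \ref{xx24}(b), which a priori only delivers Tate for the specific motives $X^{\otimes n}(mn/2)$, combined with the semisimple Frobenius hypothesis throughout $\langle X\rangle^{\otimes}$ and the duality stability of Remark \ref{xx07}, is strong enough to yield the \emph{full} Tate conjecture on \emph{every} weight-$0$ motive in $\langle X\rangle^{\otimes}$. The semisimple Frobenius hypothesis is crucial, since it supplies the semisimplicity conjecture $S$ for free across the whole tensor subcategory, so no additional input beyond the machinery of Sections 3--4 should be required.
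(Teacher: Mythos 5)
Your proposal is correct and follows essentially the paper's own route: the paper disposes of Theorem \ref{xx32} in one line as a special case of Theorems \ref{xx01} and \ref{xx02}, and your unpacking (via Theorem \ref{xx24}, Lemma \ref{xx23}, Remark \ref{xx07}, and Theorem \ref{xx06}) is exactly how the paper itself derives those two theorems. Your bridging step in part (c) — upgrading $T(X^{\otimes2}(m),l)$ to the full Tate conjecture using semisimplicity of Frobenius and duality before invoking Theorem \ref{xx02} — is a detail the paper leaves implicit, and you supply it correctly.
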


\begin{proof}
These statements are special cases of Theorems \ref{xx01} and \ref{xx02}.
\end{proof}

\begin{remark}
\label{xx33}Let $V$ be a variety of even dimension $d$ with semisimple
Frobenius. After possibly extending the base field, we may suppose that $1$ is
the only eigenvalue of $\pi_{X}$ on $H_{l}^{d}(X)$ equal to a root of $1$.
Then we have a canonical decomposition%
\[
H_{l}^{d}(X)=H_{l}^{d}(X)^{\pi_{X}}\oplus H_{l}^{d}(X)_{\mathrm{trans}}%
\]
invariant under $\pi_{X}$ (trans\thinspace=\thinspace transcendental). If
$\pi_{X}$ acts on $H_{l}^{d}(X)_{\mathrm{trans}}$ with distinct eigenvalues,
then $H_{l}^{2d}(V\times V)^{\pi_{V}}$ is generated by $H_{l}^{d}(V)^{\pi_{X}%
}\otimes H_{l}^{d}(V)^{\pi_{X}}$ and the cohomology class of the graph of
$\pi_{X}$ (\cite{zarhin1996}, 4.4). It follows that the Tate conjecture holds
for $V\times V$ if it holds for $V$.
\end{remark}

\subsection{Abelian motives}

A motive in $\Mot_{\mathrm{rat}}(\mathbb{F}{}_{q})$ is said to be abelian (or
of abelian type) if it isomorphic to a motive $(A,e,m)$ with $A$ an abelian
variety. The category of abelian motives is the smallest pseudo-abelian rigid
tensor subcategory of $\Mot_{\mathrm{rat}}(\mathbb{F}{}_{q})$ containing the
motives of curves.

The standard conjecture of Lefschetz type holds for abelian motives, and so an
abelian motive $X$ and its dual $X^{\vee}$ are isomorphic. Thus, the full Tate
conjecture holds for $X$ if $T(X,l)$ and $S(X,l)$ hold for some $l$ (see
\ref{xx06}).

\begin{theorem}
\label{xx34}Let $X$ be an abelian motive of weight $m$ over $\mathbb{F}{}_{q}$
such that $X^{\otimes2}(m)$ satisfies the Tate conjecture (e.g., $X=h^{1}A$
for $A$ an abelian variety).

\begin{enumerate}
\item If $\pi_{X}$ is regular, then the full Tate conjecture holds for all
motives in $\langle X\rangle^{\otimes}.$

\item For infinitely many prime numbers $l$, the functor $\langle X_{l}%
\rangle^{\otimes}\rightarrow\langle X_{\mathrm{num}}\rangle^{\otimes}$ is faithful.
\end{enumerate}
\end{theorem}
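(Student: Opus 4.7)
The plan is to deduce Theorem \ref{xx34} directly from Theorem \ref{xx32} once it is checked that every abelian motive has semisimple Frobenius. By Weil, Frobenius acts semisimply on $H^{1}$ of any abelian variety, so $h^{1}(A)$ has semisimple Frobenius for every abelian variety $A$ over $\mathbb{F}{}_q$. As recalled at the beginning of the previous subsection, the semisimple-Frobenius property is stable under direct sums, direct summands, tensor products, and duals; and the category of abelian motives is by definition the smallest pseudo-abelian rigid tensor subcategory of $\Mot_{\mathrm{rat}}(\mathbb{F}{}_q)$ containing the motives of curves, equivalently the $h^{1}$'s of abelian varieties. Hence every abelian motive has semisimple Frobenius, so the hypothesis of Theorem \ref{xx32} is automatic.

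With that observation in hand, I would read off both conclusions as special cases. Part (a) is exactly Theorem \ref{xx32}(b) applied to $X$: semisimple Frobenius is now known, $X^{\otimes 2}(m)$ is assumed Tate, and $\pi_X$ is assumed regular, so the full Tate conjecture propagates to all of $\langle X\rangle^{\otimes}$. Part (b) is Theorem \ref{xx32}(c) applied to the same $X$ under the same Tate hypothesis (regularity not required), which produces an infinite set of primes $l$ for which $\langle X_l\rangle^{\otimes}\to\langle X_{\mathrm{num}}\rangle^{\otimes}$ is faithful. No further input is needed beyond the identification of abelian motives as a sub-tensor-category on which Theorem \ref{xx32} is applicable.

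For the parenthetical example $X=h^{1}(A)$ one has $m=1$, and $X^{\otimes 2}(1)=h^{1}(A)^{\otimes 2}(1)$ sits as a K\"unneth summand of $h^{2}(A\times A)(1)$, so the Tate conjecture for $X^{\otimes 2}(m)$ reduces to the Tate conjecture for divisors on $A\times A$, which is Tate's classical theorem on homomorphisms of abelian varieties over finite fields. I expect no real obstacle in the argument; all the analytic and categorical work has already been carried out in Theorems \ref{xx24}, \ref{xx27}, and \ref{xx32}, and the only new ingredient is the closure of the semisimple-Frobenius subcategory under the operations that generate the abelian motives — which is a standard and already-documented fact.
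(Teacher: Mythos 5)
Your proposal is correct and follows essentially the same route as the paper, whose proof is the one-line observation that the statements are special cases of Theorems \ref{xx01} and \ref{xx02}; routing through Theorem \ref{xx32} is just a factorization of that same reduction. Your explicit check that abelian motives have semisimple Frobenius (Weil's theorem for $h^{1}$ of abelian varieties plus closure of the semisimple-Frobenius subcategory under the tensor operations generating abelian motives) is precisely the point the paper leaves implicit, and your verification of the parenthetical example via the K\"unneth summand $h^{1}(A)^{\otimes 2}(1)\subset h^{2}(A\times A)(1)$ and Tate's theorem on divisors is also correct.
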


\begin{proof}
These statements are special cases of Theorems \ref{xx01} and \ref{xx02}.
\end{proof}

\subsection{$K3$ surfaces}

Recall that the Tate conjecture is known for $K3$ surfaces over finite fields
(Artin and Swinnerton Dyer 1973 for elliptic $K3$ surfaces; Nygaard 1983 for
$K3$ surfaces of height $1;$Nygaard and Ogus 1985 for $K3$ of finite height
and $p\neq2,3$; Charles 2013 and Maulik 2014 independently for supersingular
$K3$ surfaces and $p\neq2,3$; Madapusi Pera 2015 for all $K3$ surfaces and
$p\neq2$; Madupusi Pera and Kim 2018 for all $K3$ surfaces and $p=2$). More
recently, the Tate conjecture has been proved for the square $V\times V$ of a
$K3$ surface $V$ (\cite{ito2018}).

\begin{theorem}
\label{xx35}Let $V$ be a $K3$ surface over a finite field.

\begin{enumerate}
\item If $\pi_{V}$ is regular, then the full Tate conjecture holds for all
powers of $V$.

\item There exists an infinite set of prime numbers $l$ such that
$l$-homological equivalence coincides with numerical equivalence for all
powers of $V$.
\end{enumerate}
\end{theorem}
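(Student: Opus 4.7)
The plan is to specialize Theorem~\ref{xx32} to the motive $X=h^{2}(V)$ (of weight $m=2$), and translate its conclusions about the tensor category $\langle X\rangle^{\otimes}$ into statements about powers of $V$. Three preliminary observations make this specialization work. First, $K3$ surfaces have semisimple Frobenius (Deligne; Piatetski--Shapiro--Shafarevich, as recorded in \S4.1), so the torus $S$ attached to $\mathbb{Q}{}[\pi_{V}]$ via its positive involution is defined and the formalism of that subsection applies. Second, since $h(V)=\1\oplus h^{2}(V)\oplus\1(-2)$, the full motive of each power $V^{n}$ lies in $\langle X\rangle^{\otimes}$, so any statement of the form ``the full Tate conjecture holds on all of $\langle X\rangle^{\otimes}$'' or ``$l$-homological equivalence coincides with numerical equivalence on all of $\langle X\rangle^{\otimes}$'' translates automatically into the same statement about all powers of $V$. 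Third, hard Lefschetz gives $X\simeq X^{\vee}(2)$, so the Tate conjecture for $X$ is equivalent to that for $X^{\vee}$, a point that matters when invoking Theorem~\ref{xx06}.

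With this setup, the key hypothesis to verify for both parts is the Tate conjecture for $X^{\otimes2}(2)$. I would supply it from the theorem of Ito--Ito--Koshikawa (\cite{ito2018}), which establishes the Tate conjecture for $V\times V$: the motive $X^{\otimes2}(2)$ is cut out by a K\"unneth idempotent as a direct summand of $h^{4}(V\times V)(2)$, and Remark~\ref{xx07} observes that the Tate conjecture is inherited by direct summands. Combined with semisimple Frobenius (which forces $S(X^{\otimes2}(2),l)$) and the self-duality of $X$ via hard Lefschetz (which gives $T(X^{\vee\otimes 2}(-2),l)$), Theorem~\ref{xx06} upgrades this to the full Tate conjecture for $X^{\otimes2}(2)$. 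Part~(a) is then Theorem~\ref{xx32}(b) applied to $X$, since the regularity hypothesis on $\pi_{V}$ is exactly what is assumed, and part~(b) is Theorem~\ref{xx32}(c); the faithfulness in the conclusion of (c) is, via rigidity and the identification $\gamma(Y)=\Hom(\1,Y)$, precisely the condition that $l$-homological equivalence equals numerical equivalence for every $Y\in\langle X_{l}\rangle^{\otimes}$, which by the second preliminary observation covers all powers of $V$.

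The only step that needs genuine care is the bookkeeping in the previous paragraph: one must identify the internal statement $T(X^{\otimes2}(2),l)$ used in this paper (surjectivity of the cycle class map onto the Frobenius-invariants of $\omega_{l}(X^{\otimes2})$) with the classical Tate conjecture for codimension-$2$ cycles on $V\times V$ proved by Ito--Ito--Koshikawa, and then pass to the direct summand cut out by the K\"unneth projector $p_{2}\otimes p_{2}$. This is routine given the K\"unneth decomposition of $h(V\times V)$ and the stability of the four conjectures under direct summands (Remark~\ref{xx07}), but it is the one place where anything needs to be checked. Once this identification is in place, both assertions of the theorem are immediate applications of Theorem~\ref{xx32}.
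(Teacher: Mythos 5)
Your proposal is correct and follows essentially the same route as the paper: the paper's proof is the one-line reduction to Theorems \ref{xx01} and \ref{xx02} (via their semisimple-Frobenius form, Theorem \ref{xx32}), with the Tate conjecture for $X^{\otimes2}(2)$ supplied by the Ito--Ito--Koshikawa result for $V\times V$ cited just before the theorem and semisimplicity of Frobenius for $K3$ surfaces recorded in \S 4.1. Your bookkeeping --- cutting $h^{2}(V)^{\otimes2}(2)$ out of $h^{4}(V\times V)(2)$ via Remark \ref{xx07}, and reducing powers of $V$ to $\langle h^{2}(V)\rangle^{\otimes}$ through $h(V)=\1\oplus h^{2}(V)\oplus\1(-2)$ --- is exactly the detail the paper leaves implicit.
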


\begin{proof}
These statements are special cases of Theorems \ref{xx01} and \ref{xx02}.
\end{proof}

It follows from Example \ref{xx15} that $\pi_{V}$ is regular if $V$ has height
$1$.

\bibliographystyle{cbe}
\bibliography{refTFF}

\end{document}